\theoremstyle{plain}
\newtheorem{theorem}{Theorem}[section]
\newtheorem{definition}[theorem]{Definition}
\newtheorem{proposition}[theorem]{Proposition}
\theoremstyle{remark}
\newtheorem{remark}[theorem]{Remark}
\newtheorem*{notation}{Notation}
\def\eps{\varepsilon}
\def\C{{\mathbf C}}
\def\R{{\mathbf R}}
\def\N{{\mathbf N}}
\def\Sch{{\mathcal S}}
\def\FT{\mathcal F}
\def\d{{\partial}}
\newcommand{\myitem}{\item[$\star$]}
\numberwithin{equation}{section}
\title{Small data scattering for energy-subcritical and critical Nonlinear Klein Gordon equations on product spaces}
\author[]{Lysianne HARI \and Nicola VISCIGLIA
}
\address[]{ \newline (L. HARI \and N. VISCIGLIA) 
\newline Dipartimento di Matematica, Universit\`a di Pisa, \newline Largo Bruno Pontecorvo 5, \newline 56127 Pisa, Italia.
  \newline                }
\email{lhary@mail.dm.unipi.it,}
\email{viscigli@dm.unipi.it}
\begin{document}

\begin{abstract}
We study small data scattering of solutions to Nonlinear Klein-Gordon equations with suitable pure power nonlinearities, 
posed on $\R^d\times \mathcal{M}^k$ with $k\leq2$ and $d\geq1$ and $\mathcal{M}^k$ a compact Riemannian manifold.
As a special case  we cover the $H^1-$critical NLKG on $\R \times M^2$.
\\

\noindent
\textbf{Keywords:} Nonlinear Klein-Gordon equation on manifolds, Scattering, Stri\-chartz estimates.
\\

\noindent
\textbf{MSC:} 35L70, 35R01, 35B40.
\end{abstract}
\maketitle
\let\thefootnote\relax\footnotetext{This work was partially supported by ERC Grant Blowdisol and INDAM-COFUND Project: NLDISC.}

\section{Introduction}
\label{intro}
Let $d\geq 1$ and $k=1,2$, such that $3\leq d+k\leq 6$. We consider the Cauchy problem for the Nonlinear Klein-Gordon equation (NLKG) posed on 
$R^d\times \mathcal{M}^k$
\begin{equation}
\label{NLKG}
       \d^2_t u - \Delta_{x,y} u + u = \pm |u|^{p-1} u, \quad u_{|_{t=0}}(x,y)= f(x,y), \quad \d_t u_{|_{t=0}}(x,y)= g(x,y),
\end{equation}
with $(t,x,y)\in \R_t \times \R^d_x \times \mathcal{M}^k_y $, where
\begin{itemize}
 \myitem $d\geq 1$,
 \myitem $\mathcal{M}^k_y$ is a compact Riemannian manifold of dimension $k$,
 \myitem $\Delta_{x,y} = \Delta_x + \Delta_y$, where $\Delta_x = \sum_{j=1}^{d} \d^2_{x_j}$ 
 is the Laplace operator associated to the flat metric on $\R^d$ and $\Delta_y$ is the Laplace-Beltrami operator on 
 $\mathcal{M}^k_y$. We write $dy$ the volume element of $\mathcal{M}^k_y$, and 
 $$Vol(\mathcal{M}^k) = \int_{\mathcal{M}^k} \; dy $$ the (finite) volume of $\mathcal{M}^k$,
  \myitem $p$ is such that $p_0\leq p \leq p_c$ where $p_c$ is the $H^1-$critical exponent for the whole dimension $d+k$: 
 \begin{equation}
 \label{pc}
p_c = 1 + \dfrac{4}{d+k-2},  
 \end{equation}
and where 
\begin{equation}
\label{p0}
p_0 = \max \left(2,1+\dfrac{4}{d}\right) 
\end{equation}
is larger than the  $L^2-$critical exponent on $\R^d$.
\end{itemize}
\vspace{2mm}
The aim of this paper is to investigate the persistence of scattering properties for the nonlinear Klein-Gordon equation (NLKG), 
considered on product spaces of total dimension larger than three.
The study of dispersive PDE's posed on product spaces was first initiated for 
the Schr\"odinger equation (NLS) (see \cite{HTT14, TTNLS} for problems involving global well-posedness, \cite{HP14,HPTV,TarulliRdM2,TVI,TVII} 
for long time asymptotics and \cite{TTV} for studies about ground states). 
On one hand, choosing carefully the parameters,
one has global exis\-tence and scattering results on Euclidean spaces $\R^d$, in particular for small data. 
On the other hand, considering the equation on compact Riemannian manifolds, the previous assertion fails. 
A question then arises when the equation is posed on a product of Euclidean space and compact Riemannian manifold: is the dispersive nature 
of the Euclidean part sufficient to prevail and rule the behaviour of the whole solution at infinity ? 
\\

The same question comes up for other dispersive PDEs in such settings; we focus on the nonlinear Klein-Gordon equation in this paper.
A particular case is when $\mathcal{M}^k$ is $\mathbb{T}^k$ the flat torus of dimension $k$: this kind of semiperiodic settings is 
involved in wave guide theories, especially in the case $d+k=3$. Besides, there are numerous references about the cubic (NLKG) or (NLW) 
posed on specific manifolds in general relativity.
However, we will see that the topological structure of the manifold is not relevant in our study and that the small data theory does not need 
any specific property on $\mathcal{M}^k$.
\\
The analysis of existence and scattering properties on $\R^d$ can be found in several references. 
It is well-known that scattering for small $H^1$ data holds for $$1+\dfrac{4}{d}\leq p \leq 1+ \dfrac{4}{d-2}$$
since existence of wave operators and asymptotic completeness is proved in those cases.
\\
We do not make any exhaustive list of all previous works dealing with similar problems but
for the particular case of small data scattering results, we refer the reader to 
\cite{Brenner85, GV_NLKG_I, GV_NLKG_II, Pecher84, Pecher85, Straussbook, TsuHay84}. All dimensions and all $p$ lying between 
the $L^2-$critical and the $H^1-$critical exponents are not necessarily handled in these references.
\\
Among several papers dealing with small data scattering in $\R^d$ for (NLW) (case $m^2=0$), we give 
\cite{LS95, NOpisa, MM85, MM87, MMbook, Pecher88, Strauss81, Strauss81sequel} (some of the references also contain the case $m^2>0$) 
and references therein.
\\
More recent results can be found in \cite{Nakamura03, Wang98, Wang99}, and similar problems in different frameworks are discussed in 
\cite{NOpisa}, where the nonlinearity is a sum of nonlinear terms with exponents from $L^2-$critical 
to $H^s-$critical nonlinearities with $s<d/2$, in 
\cite{SasakiNLKG06,SasakiNLKGbook} for (NLKG) with nonlocal nonliearities; \cite{LinSof15} for variable coefficients added in the cubic nonlinearity.

For a deeper discussion on well-posedness and large data scattering on $\R^d$, for energy subscritical and critical nonlinearities, 
(for both defocusing and focusing cases)
see \cite{IMN11,KSV11,Nakanishi99-low, Nakanishi98,Nakanishi99,Nakanishi01,NS11}. 
More recent small data results - such as low dimensions results - can be deduced from the references above.
We do not comment the large data problem since it will be handled in an ongoing work.

Existence of solutions for Nonlinear Klein-Gordon posed on tori and spheres have been studied, mainly in papers of Delort 
\cite{DS-NLKGtorispheres,Delort-NLKGtorus}, 
Delort-Szeftel \cite{DS-NLKGzoll,DS-NLKGspheres}, 
Fang-Zhang \cite{FZ10-NLKGtori}. 

There are also several results dealing with (NLKG) in various settings 
(with potentials and/or in other type of space structures). 
Several results about the decay of solutions and scattering for the cubic (NLW) and (NLKG) 
posed on Schwarzschild manifolds can be related to our framework. 
In fact, the equation is posed on a related static spacetime with product structure $$\R_t\times (2M,\infty) \times S^2.$$ 
But the spacelike hypersurface is equipped with a product metric of the form $g = dx^2 + c^2(x)g_0$, where 
$g_0$ is a metric on the compact manifold; in our case $c^2\equiv 1$.
However under some (strong) assumptions, one could reduce the study to (NLKG) similar to our problem but with additionnal 
terms that seem to be difficult to handle.
We give a non-exhaustive list of such references (for a deeper discussion of such frameworks involved in general relativity, 
we refer the reader to references given there): Blue-Soffer \cite{BlueSoffer, BlueSofferErr} and  handle decay estimates for (NLW), whereas existence and
asymptotical studies for (NLKG) are performed by Bachelot, Nicolas \cite{BchNic93,JPNLKGSch}, \cite{JPNLKGKerr} (on Kerr metrics).
We do not mention works about more general decay estimates, or works about the linear wave equation or wave maps posed on manifolds.

Since the Nonlinear Dirac equation (NLD) is intimately linked with (NLKG), 
results about it should be added to complete the references. 
Problems involving scattering for (NLD) are discussed for example in \cite{BH-H1,BH-H1/2, SasakiNLD3d, SasakiNLD1d} 
(see also \cite{BH-DKG} for systems of (NLD)-(NLKG)).
\\

The main tools to prove scattering are Strichartz estimates, whose proof will be detailed in Section \ref{strichartz}.
We will use the knowledge on the flat part to deduce Strichartz estimates on the whole product space, as it is 
performed in \cite{TVI}. 

\begin{remark}[Restrictions on $p$]
 Let us give details on the restrictions made on $p$. Those restrictions imply ones made on the dimensions $d,k$ at the beginning.
 \\
First, we need $p\geq 2$. In fact, smaller $p$ cannot be handled with our estimates. We would need more general Strichartz estimates 
that are not available with our argument, detailed in Section \ref{strichartz}. The homogeneous term will be estimated in 
$L^1_t L^2_{x,y}$ norms. By considering $p\geq 2$, we only deal with $p_c\geq 2$, giving $d+k\leq 6$. 
We also see that $1+4/d \leq 1+4/(d+k-2)$ which yields $k\leq2$.
\\

As for (NLS), it is quite natural to restrict $p\geq p_0$. In fact, considering data which are constant in their $y-$variables, 
it is easy to see that for $p<1+4/d$, the analysis is reduced to $L^2-$subcritical case on $\R^d$, for which no scattering in energy space seems 
to be available. 
\\
Besides as in \cite{TVI}, one can prove scattering for some $H^1-$supercritical cases $p\geq p_c$ in anisotropic Sobolev spaces of higher regularity 
(see Theorem \ref{main2}).
\end{remark}

\subsection{Notations}
We introduce some notations and definitions that will be useful in the paper.
\begin{notation}
  For any Lebesgue exponent $q\geq1$, we write $q'$ its dual: 
  $$\dfrac{1}{q}+\dfrac{1}{q'} = 1.$$
 \end{notation}
 We define admissibility for Schr\"odinger and Klein-Gordon:
 \begin{definition}
  \label{adm}
  A pair $(q,r)$ is {\bf admissible} if $2\leq r
  \leq\frac{2d}{d-2}$ ($2\leq r\leq
  \infty$ if $d=1$, $2\leq r<
  \infty$ if $d=2$)  
  and 
$$\frac{2}{q}= d\left( \frac{1}{2}-\frac{1}{r}\right).$$
 \end{definition}
 \vspace{2mm}
 \noindent
 We denote $\FT$ the Fourier transform with respect to the variable on $\R^d_x$ and $\FT^{-1}$ the inverse transform.
With the following partitions of the unity
$$ 1= \chi_0(\xi)+\sum_{j>0} \varphi_j(\xi), \forall \xi,$$
where
\begin{itemize}
 \myitem $\chi_0$ the cut-off function satisfying $\chi_0(\xi)=1$ for $|\xi|<1$ and $\chi_0(\xi)=0$ for $|\xi|>2$,
 \myitem $\varphi_j(\xi)=\chi_0(2^{-j}\xi)-\chi_0(2^{-j+1}\xi),$
\end{itemize}
we define the following operators
\begin{align*}
 P_0 f & := \mathcal{F}^{-1}\chi_0 \mathcal{F}(f), \\
 P_j f & := \mathcal{F}^{-1}\varphi_j \mathcal{F}(f), \quad \forall j >0,
\end{align*}
We denote by $\Sch'(\R^d)$ the set of all tempered distributions on $\R^d$.
\begin{notation}[Besov spaces on Euclidean spaces]
 Let $-\infty < s < \infty$, $0<q,r \leq \infty$. Then, for $0<q\leq \infty $, the Besov space $B^s_{q,r}$ is defined by
  $$B^s_{q,r}(\R^d) = \left\lbrace f\in \Sch'(\R^d) \Big|  
  \left(\sum_{j=0}^{\infty} 2^{jsr} \left\|P_j f\right\|_{L^q(\R^d)}^r\right)^{1/r}<\infty  
  \right\rbrace$$
\end{notation}
We denote by $C^\infty_0(\mathbb{M})$ 
the set of test functions on the manifold $\mathbb{M}$, which can be $\R^d$, $\mathcal{M}^k$ or  $\R^d\times\mathcal{M}^k$.
\begin{notation}[Sobolev spaces on Euclidean spaces, compact manifolds and product spaces]
Let $d\geq1$. For any $s\geq0$ and $1\leq q \leq \infty$, we write
 \begin{itemize}
  \myitem (Inhomogeneous Sobolev spaces) $W^{s,q}(\R^d)$ the completion of $C^\infty_0(\R^d)$  with respect to 
  $$\left\|\FT^{-1}((1+|\xi|^2)^{s/2}\FT(f))\right\|_{L^q(\R^d)}.$$
  
  \myitem (Homogeneous Sobolev spaces) $\dot{W}^{s,q}(\R^d)$ the completion of $C^\infty_0(\R^d)$  with respect to 
  $$\left\|\FT^{-1}(|\xi|^s\FT(f))\right\|_{L^q(\R^d)}.$$
 \end{itemize}
 When $q=2$, we will write $W^{s,2}(\R^d)=H^s(\R^d)$.
\\
For Sobolev spaces with integer derivatives $s\in \N$ in the $x-$variables, 
 an equivalent norm involving all derivatives of order smaller than $s$ could be used.
 \\
 
 Let $k\geq1$. 
\textcolor{black}{We write $\left\lbrace \lambda_j  \right\rbrace$ the eigenvalues of $-\Delta_y$, sorted in ascending order and taking in account the multiplicity,
$\left\lbrace \Phi_j(y) \right\rbrace$, the eigenfunctions associated with $\lambda_j$, that are }
\begin{equation}
         \label{basis}
         -\Delta_y \Phi_j = \lambda_j \Phi_j, \quad \lambda_j\geq 0,\; \forall j, 
        \end{equation}
then one has an orthonormal basis of $L^2\left(\mathcal{M}^k_y\right)$ given by \eqref{basis}.
\\

We introduce for any $q\in \N\setminus\left\lbrace 0\right\rbrace$, 
$$l_j^q=\left\lbrace \left\lbrace f_j \right\rbrace_{j\geq0} \Big|\left(\sum_{j\geq 0} |f_j|^q\right)^{1/q} <\infty  \right\rbrace. $$
We write $$f(y)=\sum_{j\geq 0} f_j \Phi_j(y) $$ the decomposition of any function $f: \mathcal{M}^k_y \longrightarrow \C$.
 For any $s\geq 0$ and $1\leq q \leq \infty$, we define
 \begin{itemize}
  \myitem (Inhomogeneous Sobolev spaces) $W^{s,q}(\mathcal{M}^k)$ the completion of $C^\infty_0(\mathcal{M}^k)$  with respect to 
  $$\left\|(1+\left|\lambda_j\right|)^{s/2}f_j\right\|_{l^q_j}.$$
  
  \myitem (Homogeneous Sobolev spaces) $\dot{W}^{s,q}(\mathcal{M}^k)$ the completion of $C^\infty_0(\mathcal{M}^k)$  
  with respect to 
  $$\left\|\left|\lambda_j\right|^{s/2}f_j\right\|_{l^q_j}.$$
 \end{itemize}
 When $q=2$, we will write $W^{s,2}(\mathcal{M}^k)=H^s(\mathcal{M}^k)$.
\\

Finally, for our product spaces $\mathbb{M}= \R^d_x \times \mathcal{M}^k_y$ 
we will use the following notations in the specific case $q=2$.
We consider $\lambda_j$ defined in \eqref{basis}.
\\

 \begin{itemize}
  \myitem \textcolor{black}{${H}^{\theta}_{x}{H}^{\rho}_{y}= 
  \left(1-\Delta_{x}\right)^{-\theta/2}\left(1-\Delta_{y}\right)^{-\rho/2}L^2_{x,y},$ endowed with the natural norm.}
  
  \myitem \textcolor{black}{$\dot{H}^{\theta}_{x}\dot{H}^{\rho}_{y}=
  \left(-\Delta_{x}\right)^{-\theta/2}\left(-\Delta_{y}\right)^{-\rho/2}L^2_{x,y},$ 
   endowed with the natural norms.}
  
  \myitem $H^{\theta}_{x,y} = \left(1-\Delta_{x,y}\right)^{-\theta/2}L^2_{x,y},$ endowed with the natural norm.
 \end{itemize}
\end{notation}

\subsection{The results}
Let us write for any $(f,g)\in H^1_{x,y}\times L^2_{x,y}$
\begin{align}
 \label{free solution}
 S(t)\left(f,g\right) & = 
 \cos\left(t. \sqrt{1-\Delta_{x,y}}\right)f + \dfrac{\sin \left(t. \sqrt{1-\Delta_{x,y}}\right)}{\sqrt{1-\Delta_{x,y}}}g
 \\
 \label{free vector}
 \overrightarrow{S(t)}\left(f,g\right) &= \left(S(t)\left(f,g\right), \d_t S(t)\left(f,g\right) \right).
\end{align}
Then we prove
\begin{theorem}
 \label{main}
 \textcolor{black}{Let $1\leq d\leq 5$ and $k=1,2$ such that $3\leq d+k\leq 6$. 
 Consider $p_0\leq p\leq p_c$ given by \eqref{pc}-\eqref{p0}.
Then, there exists $\delta >0$ such that the Cauchy 
 problem \eqref{NLKG} has a unique global solution 
 $$u(t,x,y)\in C^0(\R, H^1_{x,y})\cap C^1(\R, L^2_{x,y}), \quad u(t,x,y) \in L^p(\R, L^{2p}_{x,y}),$$
 $$\left(\textrm{ and so we have }\quad \d_t u(t,x,y) \in C^0(\R,L^2_{x,y})\right) $$
 for any initial data $(f,g) \in H^1_{x,y}\times L^2_{x,y}$ such that $\|f\|_{H^1_{x,y}}+\|g\|_{L^2_{x,y}}< \delta.$}
 \\
 
  \textcolor{black}{Moreover there exist two couples $(f^+,g^+),(f^-,g^-)$ in $H^1_{x,y}\times L^2_{x,y}$ such that
 $$\lim_{t \rightarrow \pm \infty} 
 \left\| \overrightarrow{S(t)}\left(f^\pm,g^\pm\right) - \left(u(t,.), \d_t u(t,.)\right) \right\|_{H^1_{x,y}\times L^2_{x,y}} =0.$$}
\end{theorem}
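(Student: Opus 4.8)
The plan is to run the classical Strauss/Ginibre--Velo fixed-point argument in the adapted Strichartz space, using the Strichartz estimates for the Klein--Gordon flow $S(t)$ on the product space $\R^d\times\mathcal M^k$ that are established in Section \ref{strichartz}. Concretely, I would set up the contraction on the complete metric space
\begin{equation*}
X_\delta=\Bigl\{u\in C^0(\R,H^1_{x,y})\cap C^1(\R,L^2_{x,y})\cap L^p(\R,L^{2p}_{x,y})\ :\ \|u\|_X\le 2C\delta\Bigr\},
\end{equation*}
where $\|u\|_X$ collects the energy norm $\sup_t(\|u(t)\|_{H^1_{x,y}}+\|\d_t u(t)\|_{L^2_{x,y}})$ together with the Strichartz norm $\|u\|_{L^p_t L^{2p}_{x,y}}$, and the metric is measured only in the (weaker) Strichartz norm to avoid the usual derivative-loss issue in closing the contraction. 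The Duhamel map is
\begin{equation*}
\Phi(u)(t)=S(t)(f,g)\ \pm\ \int_0^t \frac{\sin\bigl((t-s)\sqrt{1-\Delta_{x,y}}\bigr)}{\sqrt{1-\Delta_{x,y}}}\,\bigl(|u(s)|^{p-1}u(s)\bigr)\,ds .
\end{equation*}

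The estimates needed are: (i) the homogeneous Strichartz bound $\|S(t)(f,g)\|_X\lesssim \|f\|_{H^1_{x,y}}+\|g\|_{L^2_{x,y}}$ and the dual/inhomogeneous bound controlling $\|\Phi(u)-S(\cdot)(f,g)\|_X$ by $\bigl\||u|^{p-1}u\bigr\|_{L^1_t L^2_{x,y}}$ (this is where the choice $p\ge 2$ and the $L^1_tL^2$ endpoint matter, as flagged in the Remark); and (ii) the nonlinear/product estimate
\begin{equation*}
\bigl\||u|^{p-1}u\bigr\|_{L^1_t L^2_{x,y}}\lesssim \|u\|_{L^p_t L^{2p}_{x,y}}^{\,p-1}\,\|u\|_{L^\infty_t H^1_{x,y}},
\end{equation*}
obtained by Hölder in $t$ (exponents $\tfrac1{p'}=\tfrac{p-1}{p}$ and $\tfrac1p$), Hölder in $(x,y)$, and the Sobolev embedding $H^1_{x,y}\hookrightarrow L^{2p}_{x,y}$ valid precisely for $p\le p_c$ on the total dimension $d+k\le 6$. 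For the difference estimate one uses $\bigl||u|^{p-1}u-|v|^{p-1}v\bigr|\lesssim (|u|^{p-1}+|v|^{p-1})|u-v|$ together with the same Hölder scheme, yielding a Lipschitz constant $\lesssim \delta^{p-1}$ in the Strichartz metric. Choosing $\delta$ small makes $\Phi$ a contraction on $X_\delta$, giving the unique global solution; persistence of regularity ($u\in C^0_tH^1_{x,y}\cap C^1_tL^2_{x,y}$) follows from the Duhamel formula and the same bounds, and $\d_t u\in C^0_tL^2_{x,y}$ is then automatic.

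For scattering, I would define $(f^+,g^+)$ by the usual formula
\begin{equation*}
\begin{pmatrix} f^+\\ g^+\end{pmatrix}=\begin{pmatrix} f\\ g\end{pmatrix}\ \pm\ \int_0^{+\infty}\overrightarrow{S(-s)}\Bigl(0,\ |u(s)|^{p-1}u(s)\Bigr)\,ds,
\end{equation*}
noting that the integral converges absolutely in $H^1_{x,y}\times L^2_{x,y}$ because $\overrightarrow{S(-s)}(0,h)$ is an isometry up to constants and $\bigl\||u|^{p-1}u\bigr\|_{L^1_tL^2_{x,y}}<\infty$ by the global Strichartz bound just proved. Then $\bigl\|\overrightarrow{S(t)}(f^+,g^+)-(u(t),\d_t u(t))\bigr\|_{H^1_{x,y}\times L^2_{x,y}}$ equals the tail $\bigl\|\int_t^{+\infty}\overrightarrow{S(t-s)}(0,|u|^{p-1}u(s))\,ds\bigr\|\lesssim \bigl\||u|^{p-1}u\bigr\|_{L^1((t,\infty),L^2_{x,y})}\to 0$ as $t\to+\infty$, and symmetrically for $t\to-\infty$.

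The genuinely delicate point is not the abstract fixed-point scheme, which is standard, but making sure the Strichartz machinery on $\R^d\times\mathcal M^k$ delivers exactly the pair $\bigl(L^\infty_tH^1_{x,y},\ L^p_tL^{2p}_{x,y}\bigr)$ controlled through an $L^1_tL^2_{x,y}$ inhomogeneous term with no loss of derivatives on the flat factor — i.e. that the admissible pair on $\R^d$ transferred to the product (following the transference argument of \cite{TVI}, summing the $\mathcal M^k$-frequency pieces via the eigenfunction decomposition \eqref{basis}) is strong enough that $H^1_{x,y}\hookrightarrow L^{2p}_{x,y}$ closes the scaling bookkeeping. This forces precisely the constraints $p\ge p_0=\max(2,1+4/d)$ and $p\le p_c=1+4/(d+k-2)$, hence $k\le 2$ and $d+k\le 6$; once the Strichartz estimates of Section \ref{strichartz} are in hand the rest is the routine small-data contraction sketched above.
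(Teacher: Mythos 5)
Your overall scheme — small-data Strichartz contraction in $L^p_tL^{2p}_{x,y}$ plus a Cauchy criterion on $V(t)=\overrightarrow{S(-t)}(u,\d_t u)$ for scattering — is exactly what the paper does, and the scattering half of your argument is fine. However, your nonlinear estimate (ii) is wrong in two independent ways, and your explanation of ``what makes the Strichartz machinery work'' misidentifies the Sobolev embedding involved.

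First, the displayed bound $\||u|^{p-1}u\|_{L^1_tL^2_{x,y}}\lesssim\|u\|_{L^p_tL^{2p}_{x,y}}^{p-1}\|u\|_{L^\infty_tH^1_{x,y}}$ cannot hold on the unbounded time interval $\R$: with Hölder exponents $\tfrac{p-1}{p}$ and $\tfrac1p$ the second factor must be $L^p_t$, not $L^\infty_t$ (take $u$ constant in $(x,y)$ and radially decreasing in $t$ to see the claimed $L^\infty_t$ version fail). Second, the Sobolev embedding you invoke, $H^1_{x,y}\hookrightarrow L^{2p}_{x,y}$ on the $(d+k)$-dimensional product, holds only for $p\le \tfrac{d+k}{d+k-2}$, which is strictly smaller than $p_c=\tfrac{d+k+2}{d+k-2}$; it fails for $p$ near the energy-critical exponent, which is the most interesting part of the theorem. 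The embedding that actually matters in the paper is $H^\gamma(\mathcal M^k)\subset L^{2p}(\mathcal M^k)$ on the compact factor alone, with the smaller $\gamma=\tfrac{d+2+2p-dp}{2p}$ produced by the transference/scaling argument — this is built into Proposition \ref{thm:Strichartz} and is not the full product-space embedding $H^1_{x,y}\hookrightarrow L^{2p}_{x,y}$ you assume closes the ``scaling bookkeeping.'' Fortunately, neither your Hölder splitting nor the product-space Sobolev embedding is needed: one has the clean identity $\||u|^{p-1}u\|_{L^1_tL^2_{x,y}}=\|u\|_{L^p_tL^{2p}_{x,y}}^p$, which is precisely how the paper bounds the nonlinear term, so the contraction closes in the $L^p_tL^{2p}_{x,y}$ metric alone, with the $C^0_tH^1_{x,y}\cap C^1_tL^2_{x,y}$ regularity recovered afterward from the energy estimate of Remark \ref{energy}. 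If you replace your estimate (ii) by this identity and drop the appeal to $H^1_{x,y}\hookrightarrow L^{2p}_{x,y}$, your argument matches the paper's.
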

We also give some $H^1-$supercritical cases for which an additional regularity is required 
in the $y-$variable.
\textcolor{black}{\begin{theorem}
 \label{main2}
Let $1\leq d \leq 5$, and $k\geq 1$, with $k\geq 2$ if $d=1$. We assume $p_0\leq p$ given by \eqref{p0} and that 
$$p \quad  \left\lbrace \begin{array}{l}
                           < \;\infty \; \textrm{ if }d=1, 2, \\ \\
                           \leq \; \dfrac{d^2+2d-4}{d^2-2d} \; \textrm{ if } d\geq 3.
                          \end{array}
  \right.$$
Then for all $\gamma > k/2$, there exists $\delta >0$ such that the Cauchy 
 problem \eqref{NLKG} has a unique global solution 
 $$u(t,x,y)\in C^0(\R, H^1_{x,y})\cap C^1(\R, L^2_{x,y}) $$ satisfying
 $$(1-\Delta_y)^{\gamma/2}u(t,x,y)\in L^p(\R, L^{2p}_{x}L^{2}_y),$$
 for any initial data $(f,g)$ such that 
 $\left(f,g\right) \in \left(H^{1,\gamma}_{x,y}\times H^{0,\gamma}_{x,y}\right)\cap\left( H^{1}_{x,y}\times L^{2}_{x,y}\right)$ 
and $$\|(1-\Delta_y)^{\gamma/2}f\|_{H^1_{x,y}}+\|(1-\Delta_y)^{\gamma/2}g\|_{L^2_{x,y}}< \delta.$$
 Moreover there exist two couples 
 $$(f^+,g^+),(f^-,g^-) \textrm{ in } \left(H^{1,\gamma}_{x,y}\times H^{0,\gamma}_{x,y}\right)\cap\left( H^{1}_{x,y}\times L^{2}_{x,y}\right)$$ such that
 $$\lim_{t \rightarrow \pm \infty} 
 \left\|  \overrightarrow{S(t)}\left(f^\pm,g^\pm\right) - \left(u(t,.), \d_t u(t,.)\right) \right\|_{{H^{1,\gamma}_{x,y}\times H^{0,\gamma}_{x,y}}} =0.$$
\end{theorem}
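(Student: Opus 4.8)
The plan is to run a contraction argument on the Duhamel form of \eqref{NLKG}, treating the equation as a Klein--Gordon equation on $\R^d$ with values in the Banach algebra $H^\gamma(\mathcal M^k)$, $\gamma>k/2$. Write $F(u):=|u|^{p-1}u$ and
$$\mathcal D[G](t):=\int_0^t\frac{\sin\bigl((t-s)\sqrt{1-\Delta_{x,y}}\bigr)}{\sqrt{1-\Delta_{x,y}}}\,G(s)\,ds,$$
so that a solution is a fixed point of $u\mapsto S(t)(f,g)\pm\mathcal D[F(u)]$. Since $(1-\Delta_y)^{\gamma/2}$ commutes with $\d_t^2-\Delta_{x,y}+1$, hence with $S(t)$ and $\mathcal D$, the function $v:=(1-\Delta_y)^{\gamma/2}u$ is the fixed point with data $\bigl((1-\Delta_y)^{\gamma/2}f,(1-\Delta_y)^{\gamma/2}g\bigr)$ and nonlinearity $(1-\Delta_y)^{\gamma/2}F(u)$. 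I would look for $u$ in the complete space $Y$ defined by
$$\|u\|_Y:=\|u\|_{L^\infty_tH^1_{x,y}}+\|\d_tu\|_{L^\infty_tL^2_{x,y}}+\|v\|_{L^\infty_t(H^1_xL^2_y)}+\|\d_tv\|_{L^\infty_t(L^2_xL^2_y)}+\|v\|_{L^p_tL^{2p}_xL^2_y}<\infty .$$
The first two norms are carried along separately because $\gamma>k/2$ need not exceed $1$, so the $v$--norms alone do not dominate $\|u\|_{H^1_{x,y}}$; this is precisely why the data are assumed in $\bigl(H^{1,\gamma}_{x,y}\times H^{0,\gamma}_{x,y}\bigr)\cap\bigl(H^1_{x,y}\times L^2_{x,y}\bigr)$.

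For the linear flow the energy identity gives $\bigl\|\,\overrightarrow{S(t)}(f,g)\,\bigr\|_{L^\infty_t(H^1_{x,y}\times L^2_{x,y})}\lesssim\|f\|_{H^1_{x,y}}+\|g\|_{L^2_{x,y}}$, and after commutation the same bound for the $v$--components; the remaining piece $\|v\|_{L^p_tL^{2p}_xL^2_y}$ is controlled by the Strichartz estimates of Section~\ref{strichartz}. The mechanism there is transfer: decomposing in the eigenbasis $\{\Phi_j\}$ of $-\Delta_y$, each Fourier--in--$y$ coefficient $v_j(t,x)$ solves a Klein--Gordon equation on $\R^d$ of mass $\sqrt{1+\lambda_j}\ge1$, the flat Klein--Gordon Strichartz estimate applies with constant uniform in $j$ (a larger mass only helps, and the $\langle D_x\rangle^{1/2}$--loss is absorbed by the $H^1_x$ regularity and recorded in a Besov norm), and one sums in $\ell^2_j=L^2_y$. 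This is the step that forces $1\le d\le5$ and the stated upper bound on $p$: that range is exactly where $(p,2p)$ is an admissible exponent for the flat Klein--Gordon flow acting on $H^1_x$, with a small margin left for the transfer to the product space. The same transferred estimate applied to $\mathcal D$ yields $\|\mathcal D[G]\|_Y\lesssim\|G\|_{L^1_tL^2_{x,y}}+\|(1-\Delta_y)^{\gamma/2}G\|_{L^1_tL^2_{x,y}}$, combining the (admissibility--free) $H^1_{x,y}$ energy bound with the Strichartz bound for the $L^p_tL^{2p}_xL^2_y$--component.

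The nonlinear estimates are where the extra $y$--regularity enters. Pointwise in $(t,x)$, $H^\gamma(\mathcal M^k)$ being a Banach algebra embedded in $L^\infty(\mathcal M^k)$ gives $\|F(u)\|_{L^2_y}=\|u\|_{L^{2p}_y}^p\lesssim\|u\|_{H^\gamma_y}^p=\|v\|_{L^2_y}^p$ (Sobolev on the finite--volume manifold) and, by a Moser/composition estimate on $\mathcal M^k$, $\|(1-\Delta_y)^{\gamma/2}F(u)\|_{L^2_y}=\|F(u)\|_{H^\gamma_y}\lesssim\|u\|_{L^\infty_y}^{p-1}\|u\|_{H^\gamma_y}\lesssim\|v\|_{L^2_y}^p$. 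Since $\|F(u)(t)\|_{L^2_{x,y}}\lesssim\|v(t)\|_{L^{2p}_xL^2_y}^p$, integrating in $x$ and $t$ gives $\|F(u)\|_{L^1_tL^2_{x,y}}+\|(1-\Delta_y)^{\gamma/2}F(u)\|_{L^1_tL^2_{x,y}}\lesssim\|v\|_{L^p_tL^{2p}_xL^2_y}^p\le\|u\|_Y^p$, together with the analogous Lipschitz bounds $\|F(u)-F(w)\|\lesssim\bigl(\|u\|_Y^{p-1}+\|w\|_Y^{p-1}\bigr)\|u-w\|$. Hence the map sends a ball of radius $\sim\delta$ into itself and is a contraction on it, producing the unique global solution in the class of the statement; the claimed $u\in L^p(\R,L^{2p}_{x,y})$ follows from $v\in L^p_tL^{2p}_xL^2_y$ by $H^\gamma(\mathcal M^k)\hookrightarrow L^{2p}(\mathcal M^k)$. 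For scattering, write $\overrightarrow{S(-t)}\,\overrightarrow{u}(t)=(f,g)\pm\int_0^t\overrightarrow{S(-s)}\bigl(0,F(u)(s)\bigr)\,ds$; the integrand has $H^1_{x,y}\times L^2_{x,y}$--norm equal to $\|F(u)(s)\|_{L^2_{x,y}}\in L^1_s(\R)$, so the right side converges as $t\to+\infty$ to some $(f^+,g^+)$, giving $\bigl\|\,\overrightarrow{S(t)}(f^+,g^+)-\bigl(u(t),\d_tu(t)\bigr)\bigr\|_{H^1_{x,y}\times L^2_{x,y}}\to0$. Applying $(1-\Delta_y)^{\gamma/2}$ throughout, the integrand's norm becomes $\|(1-\Delta_y)^{\gamma/2}F(u)(s)\|_{L^2_{x,y}}\in L^1_s(\R)$, which upgrades the convergence to the norm of $\bigl(H^{1,\gamma}_{x,y}\times H^{0,\gamma}_{x,y}\bigr)\cap\bigl(H^1_{x,y}\times L^2_{x,y}\bigr)$ and shows $(f^\pm,g^\pm)$ lie in that space; the case $t\to-\infty$ is identical.

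The main obstacle is twofold. First, the space--time estimate behind the $L^p_tL^{2p}_xL^2_y$--bound: one must have the flat Klein--Gordon Strichartz estimate for the sharp range of $(d,p)$ (in particular the cutoff $p\le(d^2+2d-4)/(d^2-2d)$ for $d\ge3$) and transfer it to $\R^d\times\mathcal M^k$ without loss, which is the content of Section~\ref{strichartz}. Second, the $y$--composition estimate $\|\,|u|^{p-1}u\,\|_{H^\gamma(\mathcal M^k)}\lesssim\|u\|_{L^\infty(\mathcal M^k)}^{p-1}\|u\|_{H^\gamma(\mathcal M^k)}$ on an arbitrary compact manifold: for $\gamma$ slightly above $k/2$ (and $k\le2$, $p\ge2$) this is elementary, while for larger $\gamma$ one needs the power $z\mapsto|z|^{p-1}z$ to be of class $C^{\lceil\gamma\rceil}$, so the fractional chain rule / Moser estimate on $\mathcal M^k$ is the genuinely delicate ingredient of the argument.
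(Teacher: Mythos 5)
Your proposal is correct and follows essentially the same route as the paper: commute $(1-\Delta_y)^{\gamma/2}$ through the linear flow, apply the transferred Strichartz estimate (point $(2)$ of Proposition~\ref{thm:Strichartz}) to $v=(1-\Delta_y)^{\gamma/2}u$, exploit that $H^\gamma(\mathcal M^k)$ is a Banach algebra contained in $L^\infty(\mathcal M^k)$ for $\gamma>k/2$ to close the nonlinear estimate in $L^1_tL^2_{x,y}$, and conclude scattering by the Cauchy criterion applied to $\overrightarrow{S(-t)}\,\overrightarrow{u}(t)$.

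One small difference is worth recording. The paper closes the $H^\gamma_y$--estimate of $|u|^{p-1}u$ by writing it as a product of $p$ factors from $\{u,\bar u\}$ and invoking the algebra inequality, which is literally valid only for integer (in fact odd, or real-$u$ integer) $p$; since the theorem allows a continuous range of $p$, the step as written is incomplete. You instead invoke a Moser/composition estimate $\|\,|u|^{p-1}u\,\|_{H^\gamma_y}\lesssim\|u\|^{p-1}_{L^\infty_y}\|u\|_{H^\gamma_y}$, which is the correct tool for general $p$, and you rightly flag it as the delicate ingredient: it requires enough regularity of $z\mapsto|z|^{p-1}z$ relative to $\gamma$ (for $k=1$, $\gamma$ slightly above $1/2$ and $p\ge2$ this is elementary; for $k=2$, $\gamma$ slightly above $1$ one needs a fractional chain rule on the compact manifold). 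You also make explicit the iteration space $Y$ and the observation that the energy norms of $u$ and of $v$ must be carried separately because $\gamma>k/2$ need not dominate $1$; the paper leaves this to ``same arguments as in the proof of Theorem~\ref{main}''. In short, you follow the paper's strategy but articulate more carefully the two points the paper leaves implicit.
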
}
The main difference between both theorems, is that 
\textcolor{black}{Theorem \ref{main2} gives scattering in higher order Sobolev spaces but for larger $k$ and for a wider range of $p$. In particular, 
when $k=1,2$ we can consider some $H^1-$supercritical cases
$$p_c< p, \quad \textrm{ with } \;p \leq \dfrac{d^2+2d-4}{d^2-2d}, \quad \;\textrm{ if }\; 3 \leq d\leq 5,$$
whereas for $k\geq 3$, since $p_c < p_0$ we consider $H^1-$supercritical cases on the whole product space that are $L^2-$subcritical 
on $\R^d$.}

\begin{remark}[Comparison with the small data theory for (NLS)]
For Theorems \ref{main}, one can see that no additional regularity is required. Thus, $H^1-$scattering can be proved 
only making use of Lebesgue spaces, for some energy subcritical and critical nonlinearities; the latter beeing the most interesting case.
As an example, consider $\R^3\times \mathcal{M}^1$ and $\R^2\times \mathcal{M}^2$ for which the cubic nonlinerity is $H^1_{x,y}-$critical.
In \cite{TVI} one requires the smallness of the data in $H^0_{x_1,x_2}H^{1+\eps}_{x_3,y}$ or $H^0_{x_1,x_2}H^{1+\eps}_{y_1,y_2}$ 
and scattering is proved in those anisotropic spaces. For the mass-energy-critical exponents on $\R^d\times \mathcal{M}^2$, 
a more recent result from \cite{TarulliRdM2} gives $H^1-$scattering assuming smallness in anisotropic spaces with $1+\eps$ derivatives in $y$.
Small $(f,g)$ in $H^1_{x,y}\times L^2_{x,y}$ is enough in our case to prove $H^1-$scattering. 
\\
Besides, our results do not depend on the geometry of the compact manifold (more general $\mathcal{M}^k$ are allowed whereas the 
 explicit structure of the torus $\mathbb{T}^2$ was necessary in Theorem 1.1 of \cite{HP14} for (NLS) posed on $\R\times \mathbb{T}^2$).
 \\
 We also notice that for the small data theory, we do not use the finite time of propagation property.
\end{remark}

\section{About the Strichartz estimates}
\subsection{The strategy: from Schr\"odinger (\cite{TVI}) to Klein-Gordon}
\label{strategy}
Let us focus on more general frameworks for the Nonlinear Schr\"odinger equation to make a comparison.
The study of well-posedness on some specific product spaces and 
for small data scattering were the first steps of such analysis. Some large data scattering results are also proved in specific cases
(see \cite{HP14, HPTV, TTNLS, TarulliRdM2, TVI, TVII} and references therein).
\\

Since our aim is to investigate the persistence of scattering properties for small data, we recall the 
strategy used in \cite{TVI} and make a quick comparison with the difficulties arising for the Nonlinear Klein-Gordon equation.
The proofs of small data scattering rely on global-in-time Strichartz estimates, deduced from the Euclidean case 
and extended, in some sense, to the product spaces.
\\

Consider
\begin{equation}
 \label{NLS}
 i \d_t u + \Delta_{x,y} u = F, \quad u_{|_{t=0}}(x,y) = f(x,y), \quad (t,x,y) \in \R_t \times \R^d_x \times \mathcal{M}^k_y,
\end{equation}
where $d\geq 2$, $k\geq 1$.
The authors of \cite{TVI} 
are able to prove the following global in time Strichartz estimates on the whole product spaces, 
with a simple $L^2-$norm in the $y-$variables
\begin{equation}
\label{NLSstrichartz}
 \|u\|_{L^{q_1}_t L^{r_1}_xL^2_y} \leq C \left[ \|f\|_{L^2_{x,y}} +  
 \|F\|_{L^{q_2'}_t L^{r_2'}_x  L^2_y} \right].
\end{equation}
But the anisotropic Lebesgue spaces in $x$ and $y$ variables bring problems, handled by considering Strichartz with derivatives.
\\

When one wishes to use the same strategy for \eqref{NLKG}, the main problem appears after decomposing the functions on the basis \eqref{basis}.
In fact, writing
\begin{multline*}
 u(t,x,y)=  \cos\left(t. \sqrt{1-\Delta_{x,y}}\right)f(x,y) 
+ \dfrac{\sin \left(t. \sqrt{1-\Delta_{x,y}}\right)}{\sqrt{1-\Delta_{x,y}}} g(x,y) \\
+ 
\int_0^t \dfrac{\sin \left((t-s). \sqrt{1-\Delta_{x,y}}\right)}{\sqrt{1-\Delta_{x,y}}} F(s,x,y) \; ds,
\end{multline*}
the solution of
\begin{equation*}
       \d^2_t u - \Delta_{x,y} u + u = F, \quad u_{|_{t=0}}(x,y)= f(x,y), \quad \d_t u_{|_{t=0}}(x,y)= g(x,y),
\end{equation*}
with $(t,x,y)\in \R_t \times \R^d_x \times \mathcal{M}^k_y$, one writes  
\begin{align*}
 u(t,x,y) & = \sum_j u_j(t,x) \Phi_j(y) \\
  F(t,x,y) & = \sum_j F_j(t,x) \Phi_j(y) \\
   f(x,y) & = \sum_j f_j(x) \Phi_j(y) \\
    g(x,y) & = \sum_j g_j(x) \Phi_j(y),
\end{align*} and
obtains the following decoupled system of ``flat'' equations
\begin{equation}
 \label{flatNLKG}
 \d^2_t u_j - \Delta_{x} u_j + u_j + \lambda_j u_j = F_j, \quad u_{j|_{t=0}}(x)= f_j(x), \quad 
 \d_t u_{j|_{t=0}}(x)= g_j(x).
\end{equation}
It is easy to see that the Strichartz estimates involving 
$$\left(\cos\left(t. \sqrt{1+\lambda_j-\Delta_x}\right), \quad 
\dfrac{\sin \left(t. \sqrt{1+\lambda_j-\Delta_x}\right)}{\sqrt{1+\lambda_j-\Delta_x}}\right)$$ 
will not be independent of $\lambda_j$. Therefore the whole estimate 
on $\R^d \times \mathcal{M}_y^k$ will be altered in the $y-$variables for general admissible pairs.

But, unlike the Schr\"odinger case, we will see that in the framework of Theorem \ref{main}, 
no control of $u$ in anisotropic Sobolev spaces will be needed: 
in fact, obtaining a Sobolev norm in the $y-$variables is a good point. 
Thanks to an appropriate choice of pairs and Sobolev embeddings, we will be able to deal with the Cauchy problem \eqref{NLKG} 
with Lebesgue spaces in $y$.

\begin{remark}
Let us notice that one could deal with a mass $m^2>0, \; m^2 \neq 1$ in \eqref{NLKG}
\begin{equation*}
       \d^2_t u - \Delta_{x,y} u + m^2 u = \pm |u|^{p-1} u, \quad u_{|_{t=0}}(x,y)= f(x,y), \quad \d_t u_{|_{t=0}}(x,y)= g(x,y).
\end{equation*}
The quantity $m^2$ should be fixed at the beginning so it would not bring any problem in all computations involved in the proofs.
\\

Two other remarks about this mass $m^2$ can be made: 
first, even with the Non\-linear Wave equation ($m^2=0$), one ends up with a system of Nonlinear Klein Gordon equations 
\eqref{flatNLKG} with $\lambda_j u_j$ instead of $(1+\lambda_j)u_j$.\\
Then dealing with $m^2=0$ is not easy on arbitrary $\mathcal{M}^k_y$ since the first eigenvalue $\lambda_0$ could be zero. Hence, 
the first equation of the system \eqref{flatNLKG} would be a Wave equation, for which the Strichartz estimates 
are different with different admissibility.
We do not deal with this case is this paper.
\end{remark}

\subsection{Strichartz estimates}
\label{strichartz}
We recall admissibility for the Klein-Gordon equation, which is the same as for Schr\"odinger in Definition \ref{adm}:
  A pair $(q,r)$ is {\bf admissible} if $2\leq r
  \leq\frac{2d}{d-2}$ ($2\leq r\leq
  \infty$ if $d=1$, $2\leq r<
  \infty$ if $d=2$)  
  and 
$$\frac{2}{q}= d\left( \frac{1}{2}-\frac{1}{r}\right).$$

 We also define an exponent that will be used in the Besov spaces, in Section \ref{strichartz}:
 \begin{notation}
  Consider $(q,r)$ an admissible pair given by Definition \ref{adm}. We then denote by $s$ the following exponent:
  \begin{equation}
  \label{s}
s = 1- \dfrac{1}{2} \left(\dfrac{d}{2}+1\right).\left(\dfrac{1}{r'}-\dfrac{1}{r}\right)= 
  1- \dfrac{1}{2} \left(\dfrac{d}{2}+1\right).\left(1-\dfrac{2}{r}\right). 
  \end{equation}
 \end{notation}

\begin{proposition}[Strichartz estimates]
\label{thm:Strichartz}
 Let $1\leq d \leq 5$, $k\geq 1$ and assume $k\geq 2$ if $d=1$. Consider $p_0\leq p$ given by \eqref{p0} and 
 $u$ given by
 \begin{multline*} u(t,x,y) = \cos\left(t. \sqrt{1-\Delta_{x,y}}\right)f(x,y) 
+ \dfrac{\sin \left(t. \sqrt{1-\Delta_{x,y}}\right)}{\sqrt{1-\Delta_{x,y}}} g(x,y)
\\+\int_0^t \dfrac{\sin \left((t-s). \sqrt{1-\Delta_{x,y}}\right)}{\sqrt{1-\Delta_{x,y}}} F(s,x,y) \; ds.
\end{multline*}
 for any $f,g,F$ in $\Sch(\R^d \times \mathcal{M}^k)$. Then
\\

\noindent
$(1)$ Assume $k\leq 2$ such that $3\leq d+k\leq 6$ and 
$p_0\leq p\leq p_c$ given by \eqref{pc}-\eqref{p0}. Then
 \begin{equation}
 \label{Point1}
\|u \|_{L^{p}_t L^{2p}_{x,y}} \leq C \left[ \|f\|_{H^1_{x,y}} + \|g\|_{L^2_{x,y}} + \|F\|_{L^{1}_t L^2_{x,y}} \right],
\end{equation}
 where $C>0$ depends on $p$, and might depend on $Vol(\mathcal{M}^k)$.
  \\
 
 
 \noindent 
 $(2)$ Assume $p_0\leq p$ given by \eqref{p0}. 
 Assume $p\leq \dfrac{d^2+2d-4}{d^2-2d}$ if $3\leq d \leq 5$. Then
 \begin{equation}
 \label{Point3}
\|u \|_{L^{p}_t L^{2p}_x L^2_y} \leq C \left[ \|f\|_{H^1_{x,y}} + \|g\|_{L^2_{x,y}} + \|F\|_{L^{1}_t L^2_{x,y}} \right],
\end{equation}
 where $C>0$ depends on $p$ and might depend on $Vol(\mathcal{M}^k)$.
\end{proposition}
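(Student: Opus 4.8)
The plan is to reduce Proposition~\ref{thm:Strichartz} to the classical Euclidean Strichartz estimates for the Klein--Gordon flow on $\R^d$, applied term-by-term to the decoupled system \eqref{flatNLKG}, and then to reassemble the pieces using orthogonality of the eigenbasis $\{\Phi_j\}$ and the fact that the multiplier $\sqrt{1+\lambda_j-\Delta_x}$ differs from $\sqrt{1-\Delta_x}$ only by a harmless shift of the ``mass''. More precisely, I would first recall (or cite) the standard inhomogeneous/homogeneous Strichartz estimate for $\d_t^2 v - \Delta_x v + m^2 v = G$ on $\R^d$ with $m\geq 1$: for an admissible pair $(q,r)$ in the sense of Definition~\ref{adm} one has
\begin{equation*}
\|v\|_{L^q_t B^{s-?}_{r,2}(\R^d)} \lesssim \|v(0)\|_{H^{?}_x} + \|\d_t v(0)\|_{H^{?-1}_x} + \|G\|_{L^1_t L^2_x},
\end{equation*}
with the exponent $s$ from \eqref{s}; the key point is that the implied constant is \emph{uniform in $m\geq 1$} (it does not degrade as $\lambda_j\to\infty$), which is where the $1+\lambda_j\geq 1$ lower bound is used. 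Here one should be careful to keep track of the Besov-space loss/gain in \eqref{s} and the derivative count, because we want the final estimate to have just $\|f\|_{H^1_{x,y}}+\|g\|_{L^2_{x,y}}$ on the right.

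Next I would apply this to each $u_j$, square-sum in $j$, and invoke the Plancherel identity on $\mathcal{M}^k$ (the $\{\Phi_j\}$ are $L^2$-orthonormal) to get a mixed estimate of the form $\|u\|_{L^q_t L^r_x L^2_y}$ (or the Besov refinement thereof) controlled by $\|f\|_{H^{\sigma_1}_{x,y}}+\|g\|_{H^{\sigma_2}_{x,y}}+\|F\|_{L^1_t L^2_{x,y}}$ for the appropriate $\sigma_i$; note that summing in $j$ is seamless precisely because we only ever use an $L^2_y$ norm, never an anisotropic Lebesgue norm in $y$ — this is the structural simplification over the Schr\"odinger analysis of \cite{TVI} emphasised just before the statement. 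At this stage I have \eqref{Point3} modulo: (a) converting the Besov norm $B^{s}_{r,2}$ into a plain Lebesgue norm via the embedding $B^{s}_{r,2}(\R^d)\hookrightarrow L^{2p}_x(\R^d)$, which forces a relation among $s$, $r$, $d$ and $p$, and (b) choosing the admissible pair $(q,r)=(p, r(p))$ so that the time exponent matches $p$. The admissibility constraint $2\leq r\leq 2d/(d-2)$ together with the Sobolev embedding dictates exactly the range $p_0\leq p\leq (d^2+2d-4)/(d^2-2d)$ for $d\geq 3$ (and $p<\infty$ for $d=1,2$), so part~(2) comes out with this choice.

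For part~(1), the only additional input is the compactness of $\mathcal{M}^k$: since $k\leq 2$ and $d+k\leq 6$, the Sobolev embedding $H^{k/2+\eps}(\mathcal{M}^k)\hookrightarrow L^\infty(\mathcal{M}^k)$ (more precisely, a Gagliardo--Nirenberg-type embedding $W^{\sigma,2}(\mathcal{M}^k)\hookrightarrow L^q(\mathcal{M}^k)$ with a loss controlled by $\dim\mathcal{M}^k=k$) lets me upgrade the $L^2_y$ norm obtained above to an $L^{2p}_y$ norm at the cost of some derivatives in $y$; because we are allowed $H^1_{x,y}$ regularity and $k\leq 2$, there is exactly enough room (this is where $Vol(\mathcal{M}^k)$ enters the constant, via the embedding constant on a compact manifold). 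Combining the $L^{2p}_x$ control from the Strichartz step with the $L^{2p}_y$ control from the Sobolev step, and using that on the product the mixed norm $L^{2p}_xL^{2p}_y=L^{2p}_{x,y}$, yields \eqref{Point1}; the requirement $p\leq p_c$ is precisely what guarantees that the total number of derivatives spent (at most $1$ in $x$ from the Besov embedding, at most the rest in $y$) does not exceed the available $H^1_{x,y}$ budget, and $p\geq p_0$ guarantees admissibility of the pair on the $\R^d$ factor.

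The main obstacle I anticipate is the bookkeeping in the two-step ``derivative transfer'': the Besov–Strichartz estimate on $\R^d$ produces a precise fractional number of $x$-derivatives (encoded by $s$ in \eqref{s}), the Sobolev embedding on $\mathcal{M}^k$ consumes a precise fractional number of $y$-derivatives, and one must check that their \emph{sum}, after using that $\d_x$ and $\d_y$ together make up $\sqrt{1-\Delta_{x,y}}$, is $\leq 1$ uniformly — i.e. that the endpoint cases $p=p_c$ (for (1)) and $p=(d^2+2d-4)/(d^2-2d)$ (for (2)) really close. This is the computation that pins down all the dimensional restrictions, and it is where I would be most careful; everything else is a routine assembly of known Euclidean estimates, orthogonal summation in $j$, and compact-manifold Sobolev embeddings.
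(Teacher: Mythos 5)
Your central claim that the Euclidean Klein--Gordon Strichartz constant ``is uniform in $m\geq1$ (it does not degrade as $\lambda_j\to\infty$)'' is false, and this is where the argument breaks. For $m$ large the low-frequency dispersion of $e^{it\sqrt{m^2-\Delta_x}}$ is Schr\"odinger-like with coefficient $\sim 1/m$, so the dispersive bound is $\|e^{it\sqrt{m^2-\Delta_x}}f\|_{L^\infty_x}\lesssim m^{d/2}|t|^{-d/2}\|f\|_{L^1_x}$ and the Strichartz constant does grow polynomially in $m$. The paper never claims uniformity; instead it runs the change of variables $u_\lambda(t,x)=u(\sqrt\lambda t,\sqrt\lambda x)$ and records the resulting scaling law \eqref{Slambda}, in which a \emph{positive} power $\lambda^{\frac12(\frac{d}{\rho}+\frac1q+1-\frac d2)}=\lambda^{\gamma/2}$ sits on the \emph{left}, and a $\sqrt\lambda\,\|f_\lambda\|_{L^2}$ appears on the right. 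Substituting $\lambda=1+\lambda_j$ and square-summing over the eigenbasis $\{\Phi_j\}$, the left-hand gain becomes $(1-\Delta_y)^{\gamma/2}u$ measured in $L^2_y$ --- i.e.\ $\|u\|_{L^q_tL^\rho_xH^\gamma_y}$ --- while on the right the $\sqrt{1+\lambda_j}\,\|f_j\|_{L^2_x}$ combines with $\|f_j\|_{\dot H^1_x}$ via Plancherel to reconstruct exactly $\|f\|_{H^1_{x,y}}$. It is this $H^\gamma_y$ on the left that feeds the compact-manifold Sobolev/Morrey embedding $H^\gamma(\mathcal M^k)\hookrightarrow L^{2p}(\mathcal M^k)$ and hence gives \eqref{Point1}; the relation $p\leq p_c$ is precisely the condition that $\gamma$ from \eqref{gamma} suffices for this embedding (or for the $L^\infty_y$ Morrey range together with $Vol(\mathcal M^k)<\infty$).

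Under your uniformity hypothesis, the mode-by-mode estimate would only produce $\|u\|_{L^q_tL^\rho_xL^2_y}$ on the left, and there is then no room to ``upgrade'' $L^2_y$ to $L^{2p}_y$: that direction of embedding on a compact manifold requires derivatives in $y$ on the quantity being estimated, not on the data. The alternative you gesture at --- commuting $(1-\Delta_y)^{\sigma/2}$ through the flow and applying the flat estimate to $\langle\nabla_y\rangle^\sigma u_j$ --- would leave you with $\|f\|_{H^{s_1}_xH^\sigma_y}$ on the right, which is not controlled by $\|f\|_{H^1_{x,y}}$ for the exponents needed; this is exactly what the paper avoids by letting the scaling produce the $y$-regularity on the left for free and consume precisely one derivative of $f$ on the right. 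So you need to replace the ``uniform in $m$'' step with the explicit scaling computation of \eqref{Slambda}; once you do, the rest of your outline (Besov $\hookrightarrow L^{2p}_x$ via Theorem~\ref{embedding}, admissibility constraints giving $p_0\leq p\leq\frac{d^2+2d-4}{d^2-2d}$ for $d\geq3$, and the $H^\gamma_y\hookrightarrow L^{2p}_y$ step distinguishing parts (1) and (2)) does match the paper's proof.
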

 \begin{remark}[Energy estimates]
  \label{energy}
One also has
  $$\|u(t)\|_{H^1_{x,y}}+\|\d_t u(t)\|_{L^2_{x,y}} \leq C\left[ \|f\|_{H^1_{x,y}}+\|g\|_{L^2_{x,y}} +\int_0^t\|F(t)\|_{L^2_{x,y}}\right].$$
 \end{remark} 
 
The proof is divided into four steps:
\begin{enumerate}
 \item We first sketch the proof of Strichartz estimates for the propagator $S^1_x(t)$ given by
 $$S^1_x(t)\left(f,g\right) = 
 \cos\left(t. \sqrt{1-\Delta_{x}}\right)f + \dfrac{\sin \left(t. \sqrt{1-\Delta_{x}}\right)}{\sqrt{1-\Delta_{x}}}g.$$
 In that case, we make use of Besov spaces on $\R^d$.
 One can find various statements and proofs of Strichartz estimates for (NLKG)
 in \cite{Brenner84, Brenner85, GV_NLKG_I, GV_decay, GV_NLKG_II, IMN11, KT, NOpisa, NS11, Pecher85} 
 (see for example \cite{GV_decay, GV_NLKG_I, GV_NLKG_II, KT, NS11} for proofs).
 \\
 We will briefly sketch the proof for non-extremal pairs as it is done in \cite{NS11} with standard $TT^*$ method 
introduced by Ginibre and Velo for this equation. 
Extremal pairs are given by Keel and Tao with different methods, using Littlewood-Paley theory in \cite{KT}.
For more details, we refer the reader to the references given above.
\\
 \item We use embedding theorems between Sobolev spaces and Lebesgue spaces (classical references here are 
 \cite{Adams, Triebel_I, Triebel_II, Triebel_III}).
 \\
 \item We use a scaling argument to deduce $m-$dependent Strichartz estimates for 
 $S^{m^2}_x(t)$ where
 $$S^{m^2}_x(t)\left(f,g\right) = 
 \cos\left(t. \sqrt{m^2-\Delta_{x}}\right)f + \dfrac{\sin \left(t. \sqrt{m^2-\Delta_{x}}\right)}{\sqrt{m^2-\Delta_{x}}}g.$$
 \\
 \item We then can use the Fourier decomposition in the $y-$variables. Summing on the modes, we obtain Strichartz estimates 
 for $S(t)$ given by \eqref{free solution}. It is then possible to use embedding theorems for the compact manifold to work in Lebesgue spaces.
\end{enumerate}
\begin{remark}[The endpoint $p=2$, $q=2d/(d-2)$]
In our framework, we only need to handle the endpoint  for $d=4,5$. For lower $d$, it is easy to see that $p_0>2$. 
Thus, the results of \cite{KT} are enough to deal with the endpoint (see also \cite{MNO02,MNO03}). 
\end{remark}

\subsection{The results in the Euclidean case}
Consider 
\begin{equation}
\label{duhamel} 
u(t,x) = \cos\left(t. \sqrt{1-\Delta_x}\right)f(x) 
+ \dfrac{\sin \left(t. \sqrt{1-\Delta_x}\right)}{\sqrt{1-\Delta_x}} g(x)
+\int_0^t \dfrac{\sin \left((t-s). \sqrt{1-\Delta_x}\right)}{\sqrt{1-\Delta_x}} F(s) \; ds.
\end{equation}
Then we notice that $u$ satisfies
\begin{equation*}
 \d^2_t u - \Delta_{x} u +  u = F, \quad u_{|_{t=0}}(x)= f(x), \quad 
 \d_t u_{|_{t=0}}(x)= g(x). 
\end{equation*}
\\
The statement of the following proposition does not take in account non-sharp pairs and is not as general as in \cite{NS11} and other references. 
In fact we just state the estimates in simple spaces that we are going to use.

\begin{proposition}[Strichartz estimates for $S^{1}_x(t)$ in Besov Spaces (from \cite{NS11})]
Let $d\geq 1$ and consider an admissible pair $(q,r)$, given by Definition \ref{adm}, and $s$ as in \eqref{s}. 
Consider $u$ given by~\eqref{duhamel} for any $f,g,F$ in $\Sch(\R^d)$.
Then
\begin{equation}
\label{strichartzRd}
\left\|u\right\|_{L^{q}(\R) B^{s}_{r,2}(\R^d)} \leq C \left[ \|f\|_{H^1(\R^d)} + \|g\|_{L^2(\R^d)} 
+ \|F\|_{L^{1}(\R) L^2(\R^d)} \right]
\end{equation}
where $C>0$ depends only on the choice of the pair.
\end{proposition}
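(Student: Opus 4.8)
The plan is to reduce the Klein--Gordon estimate to a frequency-localized dispersive estimate and then run the standard $TT^*$ / Ginibre--Velo argument, summing the Littlewood--Paley pieces via the Besov structure. First I would dyadically decompose: write $u = \sum_{j\ge 0} P_j u$, and note that on the frequency annulus $|\xi| \sim 2^j$ (with $j=0$ meaning $|\xi|\lesssim 1$) the Klein--Gordon phase $\langle\xi\rangle = \sqrt{1+|\xi|^2}$ behaves, up to acceptable errors, like a homogeneous symbol of the appropriate order: for $j\ge 1$ one rescales $\xi = 2^j\eta$ and the operator $e^{it\langle\xi\rangle}$ restricted to that shell is comparable to a half-wave-type propagator with a Hessian that is nondegenerate in the radial-plus-angular sense, yielding the pointwise kernel bound $\|P_j e^{\pm it\sqrt{1-\Delta_x}}\|_{L^1\to L^\infty} \lesssim 2^{jd}\,(1+2^{j}|t|\,\cdot 2^{-j})^{-d/2}$-type decay; more precisely the sharp statement is $\|P_j e^{\pm it\sqrt{1-\Delta_x}}f\|_{L^\infty} \lesssim |t|^{-d/2} 2^{j(\frac{d}{2}+1)}\|f\|_{L^1}$ for the relevant range, which is exactly the decay that produces the exponent $s$ in \eqref{s} after interpolation with the trivial $L^2\to L^2$ bound.

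Next I would feed this frequency-localized dispersive estimate into the abstract $TT^*$ machinery. For a fixed dyadic block, the untruncated estimate
\[
\big\| e^{\pm it\sqrt{1-\Delta_x}} P_j h \big\|_{L^q_t L^r_x} \lesssim 2^{js}\|P_j h\|_{L^2_x}
\]
follows from the Hardy--Littlewood--Sobolev inequality applied to the kernel decay, for any admissible $(q,r)$ with $q>2$ (the endpoint $q=2$, relevant only for $d=4,5$ in our setting, is instead imported from Keel--Tao as the excerpt notes). Squaring the norm and summing over dyadic blocks using the $\ell^2$-orthogonality built into $B^s_{r,2}$ gives
\[
\big\| e^{\pm it\sqrt{1-\Delta_x}} h \big\|_{L^q_t B^s_{r,2}} \lesssim \|h\|_{L^2_x}.
\]
The same $TT^*$ duality gives the dual (inhomogeneous) estimate $\| \int e^{\mp is\sqrt{1-\Delta_x}} G(s)\,ds\|_{L^2_x} \lesssim \|G\|_{L^{q'}_t B^{-s}_{r',2}}$, and the Christ--Kiselev lemma upgrades the full-line time convolution $\int_0^t(\cdots)$ to a genuine retarded estimate (again with the caveat that $q=2$ is excluded from Christ--Kiselev but covered separately). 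For the Duhamel term the paper only needs the $L^1_tL^2_x$ input: one writes $\frac{\sin((t-s)\sqrt{1-\Delta_x})}{\sqrt{1-\Delta_x}}F(s) = \int_0^t (\cdots)$, applies Minkowski in time to pull the $L^1_s$ outside, and for each fixed $s$ applies the homogeneous estimate to the data $\big(0,\;\tfrac{1}{\sqrt{1-\Delta_x}}F(s)\big)$, noting $\|\tfrac{1}{\sqrt{1-\Delta_x}}F(s)\|_{L^2_x}\le \|F(s)\|_{L^2_x}$. The cosine and sine-over-$\sqrt{1-\Delta_x}$ propagators acting on $(f,g)$ are handled by writing $\cos(t\sqrt{1-\Delta_x}) = \tfrac12(e^{it\sqrt{1-\Delta_x}}+e^{-it\sqrt{1-\Delta_x}})$ and similarly for the sine, with the factor $1/\sqrt{1-\Delta_x}$ absorbing one derivative so that $g$ enters only at the $L^2$ level while $f$ costs the full $H^1$.

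The main obstacle is the dispersive estimate itself: the Klein--Gordon symbol $\langle\xi\rangle$ is \emph{not} homogeneous, so the clean scaling available for the wave equation fails, and one must separately treat low frequencies (where $\langle\xi\rangle\approx 1+\tfrac12|\xi|^2$ and the operator is Schr\"odinger-like, giving $|t|^{-d/2}$ decay with no derivative loss) and high frequencies (where $\langle\xi\rangle\approx|\xi|$ but the curvature of the level sets degenerates in the radial direction, forcing the $2^{j(d/2+1)}$ loss and hence the Besov formulation rather than an $L^r_x$-Sobolev formulation). Since the excerpt explicitly says this proposition is quoted from \cite{NS11} and only the non-sharp pairs actually used are stated, I would not reprove the dispersive bound in detail; I would cite \cite{GV_decay, GV_NLKG_I, GV_NLKG_II, NS11} for the stationary-phase analysis and \cite{KT} for the endpoint, and present the above $TT^*$-plus-dyadic-summation argument as the derivation of \eqref{strichartzRd} from those inputs.
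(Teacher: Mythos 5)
Your proposal follows essentially the same route as the paper's sketch: dyadic (Littlewood--Paley) decomposition of the half-Klein--Gordon propagator, a frequency-localized $L^1\to L^\infty$ dispersive estimate with loss $2^{j(\frac{d}{2}+1)}|t|^{-d/2}$, interpolation plus Hardy--Littlewood--Sobolev to obtain the per-block Strichartz bound, square-summation over blocks into the Besov scale, Christ--Kiselev for the retarded Duhamel integral, and Keel--Tao for the endpoint $q=2$ (relevant only for $d=4,5$).

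One bookkeeping slip should be corrected. The HLS step produces a per-block loss $2^{j\sigma}$ with
$\sigma=\tfrac{1}{2}\bigl(\tfrac{d}{2}+1\bigr)\bigl(\tfrac{1}{r'}-\tfrac{1}{r}\bigr)=1-s$,
not $2^{js}$; therefore, after weighting by $2^{js}$ and summing in $l^2_j$, the homogeneous estimate reads
$\bigl\|e^{\pm it\sqrt{1-\Delta_x}}h\bigr\|_{L^q_t B^s_{r,2}}\lesssim \|h\|_{H^1_x}$,
\emph{not} $\lesssim\|h\|_{L^2_x}$. This is exactly what \eqref{strichartzRd} asks for and is consistent with your closing remark that $f$ costs the full $H^1$ while the factor $(1-\Delta_x)^{-1/2}$ gains one derivative for $g$; so it is a notational inconsistency in the intermediate displays rather than a gap, but as written those two formulas contradict each other by one derivative. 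Your additional observation that, for an $L^1_tL^2_x$ source, Minkowski in time already yields the retarded estimate without Christ--Kiselev is correct and slightly more elementary than the paper, which invokes Christ--Kiselev even in this special case.
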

In the statement we only put $L^1_tL^2_{x}$ since we will not use more general spaces as it will be explained in Remark \ref{rmk:L1L2}.
\begin{proof}[Sketch of the proof, from \cite{NS11}]
\textcolor{black}{We just sketch the proof for non-extremal pairs, assuming a dispersion inequality. 
We proceed as in the self-contained proof from \cite{NS11}, with standard $TT^*$ method.}
We will not recall intermediate results with stationary-non stationary phase methods.
\\

We use the cut-off functions introduced in Section \ref{intro}.
\\
We denote $\chi_0$ a cut-off function equal to one when $\xi$ is close to zero and $\chi$ a cut-off function equal to one on $1/2<|\xi|<2$, 
but supported on $\R^d\setminus\left\lbrace 0\right\rbrace $. 
Denote
\begin{align*}
U_0(t) &:= e^{it\sqrt{1-\Delta_x}}\chi_0(\nabla_x)\\
 U_\lambda(t)&:= e^{it\sqrt{1-\Delta_x}}\chi\left(\dfrac{\nabla_x}{\lambda}\right), \; \lambda \geq 1,
\end{align*}
that are bounded in $L^2$, uniformly in $\lambda$. It is easy to see that for any $f,g,F$ in the Schwartz class, and some 
fixed $q,r$ the following statements are equivalent
\textcolor{black}{\begin{align}
 \label{i} &\|U_\lambda U_\lambda^* F\|_{L^q_tL^r_x}\leq C(\lambda)^2\|F\|_{L^{q'}_tL_x^{r'}}, \; \forall F \in \Sch(\R\times \R^d),\\
  \label{ii} &\|U^*_\lambda g\|_{L_x^2}\leq C(\lambda)\|g\|_{L^{q'}_tL_x^{r'}}, \; \forall g \in \Sch(\R\times \R^d),\\
   \label{iii} & \|U_\lambda f\|_{L^q_tL^r_x}\leq  C(\lambda)\|f\|_{L^2_x}, \; \forall f \in \Sch(\R^d).
\end{align}}
\vspace{2mm}
\begin{center}
\begin{tabular}{p{12cm}}
  \it \textbf{Claim 1} : Let $d\geq1$, $2<q\leq \infty$, $2\leq r \leq \infty$ with $(q,r)$ admissible as in Definition \ref{adm}, 
 that is $2/q =d(1/2-1/r)$. Then, for any function $f$ in the Schwartz class, one has 
 \\[2mm]
 
 \hspace{2cm}$\|U_\lambda f\|_{L^q_tL^r_x}\leq C \left(1+\lambda^2\right)^\sigma \|f\|_{L^2_x}, \quad \lambda =0 \textrm{ or } \lambda \geq 1,$
 \\[2mm] \it where $\sigma =\dfrac{1}{2}\left(\dfrac{d}{2}+1\right)\left(\dfrac{1}{r'}-\dfrac{1}{r}\right)$.
\end{tabular}
\end{center}
\vspace{2mm}
\textbf{Proof of Claim 1:} One has 
$$\|U_\lambda f\|_{L^2_x}\leq C \|f\|_{L^2_x}.$$
\textcolor{black}{We just get for granted the following dispersion estimate
using stationary phase methods and Young's inequality (\cite{Brenner84,Pecher85,GV_decay,GV_NLKG_I,GV_NLKG_II,NS11})}
$$\|U_\lambda f\|_{L_x^\infty} \leq C_0 \lambda^{1+d/2}t^{-d/2} \|f\|_{L_x^1}.$$
Interpolating, one gets
$$\|U_\lambda f\|_{L_x^r} \leq {C_1} \left(\lambda^{1+d/2}t^{-d/2}\right)^{\frac{1}{r'}-\frac{1}{r}} \|f\|_{L_x^{r'}}.$$
As a consequence
$$\|U_\lambda U_\lambda^* F (t)\|_{L^r_x} \leq C \int_{-\infty}^{+\infty} 
\left(\lambda^{1+d/2}(t-s)^{-d/2}\right)^{\frac{1}{r'}-\frac{1}{r}} \|F (s)\|_{L^{r'}_x} ds$$
and Hardy-Littlewood-Sobolev inequality writes
$$\left\| f * \dfrac{1}{|t|^\alpha}\right\|_{L^s_t} \leq C(\rho,s,\alpha)\|f\|_{L^\rho_t}, 
\quad 1+1/s=\alpha + 1/\rho, \;\alpha \in (0,1).$$
Using this inequality, one gets
\begin{align*}
 \left(\int_\R \left|\|U_\lambda U_\lambda^* F (t)\|_{L^r_x}\right|^q\right)^{1/q}
 &\leq C' \lambda^{\left(\frac{d}{2}+1\right)\left(\frac{1}{r'}-\frac{1}{r}\right)} \left(\int_\R  \left|\int_{-\infty}^{+\infty}
 \dfrac{1}{(t-s)^{d/2\left(\frac{1}{r'}-\frac{1}{r}\right)}} \|F(s)\|_{L^{r'}_x} ds \right|^q dt \right)^{1/q}, \\
  \|U_\lambda U_\lambda^* F (t)\|_{L_t^qL^r_x}  &\leq C' \lambda^{\left(\frac{d}{2}+1\right)\left(\frac{1}{r'}-\frac{1}{r}\right)}
 \|F\|_{L^\rho_tL^{r'}_x},
\end{align*}
with $\alpha = d/2\left(\frac{1}{r'}-\frac{1}{r}\right)$, and $\rho=q'$.
\textcolor{black}{Noticing
that for $\lambda \geq 1$, one can bound $\lambda$ by $\sqrt{1+\lambda^2}$, we get~\eqref{i} for admissible pairs, except the endpoint $q=2$,
with $$C(\lambda)^2=C'\left(1+\lambda^2\right)^{\frac{1}{2}\left(1+\frac{d}{2}\right)\left(\frac{1}{r'}-\frac{1}{r}\right)}.$$ 
Now that \eqref{i} is proved, one also has \eqref{ii} and \eqref{iii} for admissible pairs with $C(\lambda)$.}
\begin{flushright}
 $\blacksquare$
\end{flushright}

Let us recall that Besov and Sobolev norms can be expressed with the partition of the unity given in the introduction, as
\begin{align*}
 \|f\|_{B^\sigma_{r,2}} & :=  \|P_0 f\|_{L^r} + \left(\sum_{j>0}2^{2\sigma j} \|P_jf\|_{L^r}^2\right)^{1/2} \\
 & \simeq \left\| 2^{\sigma j} \|P_jf\|_{L^r}\right\|_{l^2_j}, \\
 \|f\|_{H^\sigma} & := \|P_0 f\|_{L^2} + \left(\sum_{j>0}2^{2\sigma j} \|P_jf\|_{L^2}^2\right)^{1/2}.
\end{align*}
Then, taking $\lambda \in \left\lbrace 0 \right\rbrace \cup \left\lbrace 2^j, \; j\in \N \right\rbrace $ and summing in square 
the estimates from \textbf{Claim 1}, one obtains
$$\|e^{it\sqrt{1-\Delta}}f\|_{L^q_t B^0_{r,2}}\leq C \|f\|_{H^\sigma}, $$
and so
$$\|e^{it\sqrt{1-\Delta}}f\|_{L^q_t B^{1-\sigma}_{r,2}}\leq C \|f\|_{H^1}. $$
Consider $u$ given by \eqref{duhamel}.
\begin{center}
\begin{tabular}{p{12cm}}
  \it \textbf{Claim 2}: Any solution of \\[2mm]
 \hspace{2cm} $\d_t^2 u -\Delta_x u +u = F, \quad u(0,x) =f(x), \; \d_t u(0,x)=g(x), $
  \\[2mm]
  in $\R\times \R^d$ satisfies the estimates 
  \\[2mm]
  \hspace{2cm} $\|u\|_{L^q_t B^s_{r,2}} \leq C \left[\|f\|_{H^1}+\|g\|_{L^2}+\|F\|_{L^1_tL^2_x} \right],$ 
  \\[2mm] 
  where $(q,r)$ are admissible and $s$ is as in \eqref{s}.
\end{tabular}
\end{center}
\textbf{Proof of Claim 2:} The hompgeneous case $F=0$ is given before. Consider the case $F\neq 0$, but $f=g=0$. The whole 
estimate is obtained combining both cases. \\
For any $j \geq 0$, one wants to prove
$$\|P_j u\|_{L^q_tB^s_{r,2}}\leq C \|F\|_{L_t^1L^2_x}.$$
Noticing that for $C(\lambda,r)\simeq {\lambda^{\frac{1}{2}\left(\frac{d}{2}+1\right)\left(\frac{1}{r'}-\frac{1}{r}\right)}}$
$$\begin{array}{lll}
 U_0 : L^2 \rightarrow L^qL^r, & U_\lambda : C(\lambda,r)L^2 \rightarrow L^qL^r, & U_0U_0^* : L^1L^2 \rightarrow L^qL^r,\\
 \\
 U_0^* : L^1L^2 \rightarrow L^2, & U_\lambda^* : L^1L^2 \rightarrow L^2,&
 U_\lambda U_\lambda^* : C(\lambda,r) L^1L^2 \rightarrow L^qL^r,
\end{array}$$
and writing 
$$U_0 U_0^* F(t) = \int_\R e^{i(t-s)\sqrt{1-\Delta}}\chi_0(\xi)^2 F(s) ds, $$ 
we use Christ-Kiselev Lemma which allows to handle the integral on $[0,t]$, and the same argument can be used for the 
$U_\lambda U_\lambda^*$ term, providing the good weight in $\lambda$. The estimates are obtained 
proceeding as before.
\begin{flushright}
 $\blacksquare$
\end{flushright}
Finally, noticing that the endpoint $q=2$ is not given with the previous method, but is given in \cite{KT} 
and interpolating the associated estimates with the energy estimates,
one obtains the more general Strichartz estimates stated in the Proposition, by interpolation.
\end{proof}

\subsection{Statements of embedding theorems}
In this section, we state the embedding theorems without proofs and apply them to the estimates given by \eqref{strichartzRd}.
From \cite{Triebel_II, Triebel_III} one states
\begin{theorem}[Embedding theorems]
\label{embedding}
Let $d\geq 1$, $s>0$, $1\leq r,\rho \leq \infty$ and consider Besov $B^s_{r,2}(\R^d)$ and Lebesgue $L^{\rho}(\R^d)$ spaces
\begin{enumerate}
 \item $B^s_{r,2}(\R^d) \subset L^{r}(\R^d)$,
 \item $B^s_{r,2}(\R^d) \subset L^{r^*}(\R^d)$ if $r^*\geq 2$, where $s-d/r=-d/r^*$.
\end{enumerate}
\vspace{1mm}
Interpolation gives the Lebesgue spaces with exponents $\rho$ lying between $r$ and $r^*$, for $r\geq 2$\\

\hspace*{0.5mm} $(3)$ $B^s_{r,2}(\R^d) \subset L^{\rho}(\R^d)$ if $2\leq r\leq \rho \leq r^*$, where $s-d/r=-d/r^*$.
\end{theorem}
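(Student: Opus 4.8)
The plan is to reduce the whole statement to elementary estimates on Littlewood--Paley blocks $P_jf$, supplemented by one classical but non-elementary input — the identification of $L^{\rho}(\R^d)$ with a Triebel--Lizorkin space — which is exactly the point at which one defers to \cite{Triebel_II,Triebel_III}. For $(1)$, let $f\in B^s_{r,2}(\R^d)$ with $s>0$ and write $f=\sum_{j\geq0}P_jf$; the partial sums are Cauchy in $L^r$ by the estimate below, hence converge in $L^r$, and since they converge to $f$ in $\Sch'(\R^d)$ the $L^r$-limit is $f$. Writing $\|P_jf\|_{L^r}=2^{-js}\bigl(2^{js}\|P_jf\|_{L^r}\bigr)$ and applying the Cauchy--Schwarz inequality in $j$,
\[
\|f\|_{L^r}\leq\sum_{j\geq0}\|P_jf\|_{L^r}\leq\Big(\sum_{j\geq0}2^{-2js}\Big)^{1/2}\Big(\sum_{j\geq0}2^{2js}\|P_jf\|_{L^r}^2\Big)^{1/2}\lesssim\|f\|_{B^s_{r,2}},
\]
the first factor being finite precisely because $s>0$.

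For $(2)$, I would first gain integrability block by block via Bernstein's inequality: since $P_jf$ has Fourier support in an annulus $\{|\xi|\sim 2^j\}$ (a ball for $j=0$) and $s>0$ forces $r<r^*$, one has $\|P_jf\|_{L^{r^*}}\lesssim 2^{jd(1/r-1/r^*)}\|P_jf\|_{L^r}=2^{js}\|P_jf\|_{L^r}$ by the defining relation $s-d/r=-d/r^*$. Taking $\ell^2_j$-norms yields $\|f\|_{B^0_{r^*,2}}\lesssim\|f\|_{B^s_{r,2}}$, i.e.\ $B^s_{r,2}(\R^d)\subset B^0_{r^*,2}(\R^d)$. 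It then remains to prove $B^0_{\rho,2}(\R^d)\subset L^{\rho}(\R^d)$ for $\rho\geq2$, which follows from the Littlewood--Paley characterization $F^0_{\rho,2}=L^{\rho}$ for $1<\rho<\infty$ together with the elementary inclusion $B^0_{\rho,2}\subset F^0_{\rho,2}$ valid when $\rho\geq 2$ (Minkowski's inequality comparing $\ell^2_j$ and $L^{\rho}$). This last step is the only genuinely non-elementary ingredient — it is a square-function/multiplier theorem — and it is here that one cites \cite{Triebel_II,Triebel_III} rather than reproving it; applying it with $\rho=r^*$ closes $(2)$.

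For $(3)$, no operator interpolation is needed: by $(1)$ and $(2)$ any $f\in B^s_{r,2}(\R^d)$ lies in $L^r(\R^d)\cap L^{r^*}(\R^d)$ with both norms controlled by $\|f\|_{B^s_{r,2}}$, and the log-convexity of Lebesgue norms,
\[
\|f\|_{L^{\rho}}\leq\|f\|_{L^r}^{1-\theta}\,\|f\|_{L^{r^*}}^{\theta},\qquad \frac1{\rho}=\frac{1-\theta}{r}+\frac{\theta}{r^*},\ \ \theta\in[0,1],
\]
immediately gives $B^s_{r,2}(\R^d)\subset L^{\rho}(\R^d)$ for every $\rho$ with $r\leq\rho\leq r^*$. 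The hypothesis $r\geq2$ is used only to guarantee $r^*>r\geq2$, so that $(2)$ applies on the whole range. In summary, the work is routine except for the endpoint $B^0_{\rho,2}\subset L^{\rho}$, which is the expected obstacle and is imported from the cited references.
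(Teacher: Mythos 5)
The paper does not prove Theorem \ref{embedding}; it states the result and points the reader to \cite{Triebel_II,Triebel_III} (and \cite{Adams,Triebel_I}) for proofs. Your proposal supplies the standard self-contained argument, and it is correct: point $(1)$ by the triangle inequality and Cauchy--Schwarz in $j$, exploiting $s>0$; point $(2)$ by block-wise Bernstein to pass from $B^s_{r,2}$ to $B^0_{r^*,2}$, followed by $B^0_{r^*,2}\subset F^0_{r^*,2}=L^{r^*}$ via Minkowski's inequality plus the Littlewood--Paley square-function theorem; point $(3)$ by log-convexity of Lebesgue norms, which is indeed all that is needed (no operator interpolation). What you add over the paper's treatment is the explicit isolation of the single non-elementary input, namely $F^0_{\rho,2}=L^\rho$ for $1<\rho<\infty$; the rest is block-level algebra. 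One caveat worth making explicit: the statement as written nominally allows $r^*=\infty$ (and this actually occurs for every admissible $p$ when $d=2$), but $F^0_{\infty,2}\neq L^\infty$ and indeed $B^{d/r}_{r,2}\not\subset L^\infty$ in general, so point $(2)$ should be read with $r^*<\infty$. Your argument correctly requires $\rho<\infty$ at the square-function step, and the paper itself implicitly agrees, writing $\rho<r^*=\infty$ with strict inequality in the $d=2$ computation.
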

Point 1 of Theorem \ref{embedding} can be found in \cite{Triebel_II}(Section 2.3.2, Estimate (24) in Remark 3, p.97). \\
Point 2 can be found in \cite{Triebel_III} (Section 1.9.1, Theorem 1.73, Estimate (1.200), p.40) and can be deduced from a more general embedding 
involving Triebel-Lizorkin spaces somehow linked with Sobolev spaces. 
Such results can also be deduced from \cite{Adams, Triebel_I} and references therein.
\textcolor{black}{One can easily see that the relation between $r,r^*$ and $\rho$ are the same as for usual Sobolev embeddings which is quite natural 
since Besov spaces are obtained with interpolation between Sobolev spaces.}
Note that this theorem gives the Lebesgue spaces used in \cite{NS11} for the cubic NLKG on $\R$ (see Exercise 2.45, p. 75) 
and $\R^3$ (see Exercise 2.42, p. 74 and Lemma 2.46 p.77).
\\

We deduce
\begin{proposition}
Let $d\geq 1$. Consider an admissible pair $(q,r)$ given by Definition \ref{adm}, and $s$ as in \eqref{s}.
 Consider $u$ given by \eqref{duhamel}.
 Then for any $\rho$ such that $2\leq r\leq \rho \leq r^*$, where $s-d/r=-d/r^*$, we have
 \begin{equation}
 \label{S0}
\|u\|_{L^q_t L^\rho_x} \leq C \left[ \|f\|_{H^1_x} + \|g\|_{L^2_x} + \|F\|_{L^1_t L^2_x}\right],
 \end{equation}
\end{proposition}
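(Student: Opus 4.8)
The plan is to derive \eqref{S0} directly from the Besov-space Strichartz estimate \eqref{strichartzRd} together with the embedding theorems of Theorem \ref{embedding}. First I would fix an admissible pair $(q,r)$ and the associated exponent $s$ given by \eqref{s}, and recall that by \eqref{strichartzRd} any $u$ of the form \eqref{duhamel} satisfies
$$\left\|u\right\|_{L^{q}(\R) B^{s}_{r,2}(\R^d)} \leq C \left[ \|f\|_{H^1(\R^d)} + \|g\|_{L^2(\R^d)} + \|F\|_{L^{1}(\R) L^2(\R^d)} \right].$$
The point is then purely functional-analytic: I need that, for a.e.\ $t$, one has the pointwise-in-time inequality $\|u(t)\|_{L^\rho_x} \leq C \|u(t)\|_{B^s_{r,2}(\R^d)}$, after which taking the $L^q_t$ norm of both sides and invoking the estimate above closes the argument.

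That pointwise embedding is exactly Point $(3)$ of Theorem \ref{embedding}: since $2 \leq r \leq \rho \leq r^*$ with $s - d/r = -d/r^*$, we have $B^s_{r,2}(\R^d) \subset L^\rho_x(\R^d)$ with a norm bound depending only on $d,s,r,\rho$. Here I should briefly check that the hypotheses of Theorem \ref{embedding} are genuinely met for the relevant admissible pairs, in particular that $s > 0$ (so that the Besov space has positive smoothness) and that $r^* \geq 2$; both follow from the definition \eqref{s} of $s$ and the admissibility constraint $2 \leq r \leq 2d/(d-2)$, so this is a routine verification rather than a real obstacle. One also notes that when $\rho = r$ only Point $(1)$ is needed, and when $\rho = r^*$ only Point $(2)$, so the interpolated statement $(3)$ covers the whole claimed range uniformly.

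So the proof is short: apply Theorem \ref{embedding}$(3)$ for fixed $t$, take $L^q$ in time, and substitute \eqref{strichartzRd}. The only thing requiring any care — and the closest thing to an ``obstacle'' — is making sure the admissibility range and the constraint $s-d/r = -d/r^*$ are compatible with $2\le r\le\rho\le r^*$ for the pairs actually used downstream (in particular for the endpoint $d=4,5$ case and for the specific $(q,r)=(p,\cdot)$ choices feeding into Proposition \ref{thm:Strichartz}); but since Theorem \ref{embedding} is quoted in the stated generality, within this excerpt that compatibility is simply a matter of choosing $\rho$ in the indicated interval. I would therefore present the proof as a one-paragraph deduction, emphasizing that it is the combination of the Euclidean Besov Strichartz estimate \eqref{strichartzRd} with the Besov--Lebesgue embedding that yields \eqref{S0}, with the constant $C$ depending only on the choice of admissible pair and on $\rho$.
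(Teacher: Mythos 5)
Your proposal is correct and matches the paper's own proof, which is exactly the one-line deduction of applying the Besov--Lebesgue embedding of Theorem \ref{embedding} pointwise in time to the Besov-space Strichartz estimate \eqref{strichartzRd}. The extra remarks you add (checking $s>0$, $r^*\geq 2$, and the pointwise-in-$t$ step before taking the $L^q_t$ norm) are the routine verifications the paper implicitly leaves to the reader.
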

The proof is immediate, applying Theorem \ref{embedding} to \eqref{strichartzRd}.
\textcolor{black}{\begin{remark}
 \label{rmk:L1L2}
Strichartz estimates proved in \cite{NS11} are more general than Proposition \ref{thm:Strichartz}, 
since they allow the source term to be in some
$L^{q'}B^{(1-s')}_{r',2}(\R^d)$, where $(q,r)$ admissible, $s'$ as in \eqref{s}. However, it is well known that embeddings of type 
$$W^{\sigma,r}(\R^d)\subset B^{\sigma}_{r,2}(\R^d), \; 1<r\leq 2, $$ are valid and the only way to obtain a Lebesgue space is to consider $\sigma=0$. 
Thus, the only $(q',r')$ giving Lebesgue exponents for the source term is $(1,2)$ with $s'=1$.
\end{remark}}

\subsection{The scaling argument for $m^2\neq 1$}
All the estimates enunciated before are proved for $m^2=1$.
As explained in Section \ref{strategy}, they need to be adapted to different masses, which is performed using a scaling argument.
\\

Consider $\lambda >0$ and 
\begin{multline}
\label{duhamellambda} u_\lambda(t,x) = \cos\left(t. \sqrt{\lambda-\Delta_x}\right)f_\lambda(x) 
+ \dfrac{\sin \left(t. \sqrt{\lambda-\Delta_x}\right)}{\sqrt{\lambda-\Delta_x}} g_\lambda(x) \\
+\int_0^t \dfrac{\sin \left((t-s). \sqrt{\lambda-\Delta_x}\right)}{\sqrt{\lambda-\Delta_x}} F_\lambda(s) \; ds.
\end{multline}
One sees that $u_\lambda(t,x) = u\left(\sqrt{\lambda}t, \sqrt{\lambda}x\right)$ where u is given by \eqref{duhamel} and that 
 it satisfies
 \begin{equation}
 \label{NLKGlambda}
  \d^2_t u_\lambda - \Delta_{x} u_\lambda +  \lambda u_\lambda = F_\lambda, \quad u_{\lambda|_{t=0}}(x)= f_\lambda(x), \quad 
 \d_t u_{\lambda|_{t=0}}(x)= g_\lambda(x), 
 \end{equation}
where 
        $$ f_\lambda(x) =  f\left(\sqrt{\lambda}x\right),
       g_\lambda(x) = \sqrt{\lambda}g\left(\sqrt{\lambda}x\right),
       F_\lambda(t,x) = \lambda F\left(\sqrt{\lambda}t, \sqrt{\lambda}x\right).$$
 \\      
Then
\begin{proposition}
 Let $d\geq 1$ and consider some $(q,r)$ admissible as in Definition \ref{adm}, $s$ given by \eqref{s} and $\rho$ such that $2\leq r\leq \rho \leq r^*$, where $s-d/r=-d/r^*$.
 Consider $u$ given by \eqref{duhamel} for which \eqref{S0} holds.
 Then for $u_\lambda$ given by \eqref{duhamellambda}, one has 
 \begin{equation}
  \label{Slambda}
\lambda^{\frac{1}{2}\left(\frac{d}{\rho}+\frac{1}{q}+1-\frac{d}{2}\right)}\|u_\lambda\|_{L^{q}_t L^{\rho}_x} 
\leq C \left[ \sqrt{\lambda}\|f_\lambda\|_{L^2_x}+  \|f_\lambda\|_{\dot{H}^1_x} + \|g_\lambda\|_{L^2_x} + 
 \|F_\lambda\|_{L^1_t L^2_x}\right].  
 \end{equation}
\end{proposition}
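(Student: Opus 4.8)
The plan is to obtain \eqref{Slambda} from \eqref{S0} purely by change of variables, tracking how each norm scales under the dilation $(t,x)\mapsto(\sqrt\lambda t,\sqrt\lambda x)$. First I would record the basic scaling identities for Lebesgue norms: if $v(t,x)=w(\sqrt\lambda t,\sqrt\lambda x)$ then $\|v\|_{L^q_tL^\rho_x}=\lambda^{-\frac{1}{2q}-\frac{d}{2\rho}}\|w\|_{L^q_tL^\rho_x}$, and for the purely spatial norms $\|f_\lambda\|_{L^2_x}=\lambda^{-d/4}\|f\|_{L^2_x}$, $\|g_\lambda\|_{L^2_x}=\lambda^{1/2-d/4}\|g\|_{L^2_x}$ (using $g_\lambda(x)=\sqrt\lambda g(\sqrt\lambda x)$), $\|f_\lambda\|_{\dot H^1_x}=\lambda^{1/2-d/4}\|f\|_{\dot H^1_x}$ since one derivative lands as a factor $\sqrt\lambda$, and $\|F_\lambda\|_{L^1_tL^2_x}=\lambda\cdot\lambda^{-1/2-d/4}\|F\|_{L^1_tL^2_x}=\lambda^{1/2-d/4}\|F\|_{L^1_tL^2_x}$ (the extra $\lambda$ coming from $F_\lambda(t,x)=\lambda F(\sqrt\lambda t,\sqrt\lambda x)$, as dictated by \eqref{NLKGlambda}).

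Next I would observe that, because $u_\lambda(t,x)=u(\sqrt\lambda t,\sqrt\lambda x)$ with $u$ given by \eqref{duhamel}, the estimate \eqref{S0} applied to $u$ reads
\begin{equation*}
\|u\|_{L^q_tL^\rho_x}\leq C\left[\|f\|_{H^1_x}+\|g\|_{L^2_x}+\|F\|_{L^1_tL^2_x}\right].
\end{equation*}
I would then multiply both sides by the appropriate power of $\lambda$ so that every term on the right becomes one of the quantities appearing on the right of \eqref{Slambda}, and check that the left side produces exactly the claimed prefactor. Concretely, multiplying \eqref{S0} by $\lambda^{1/2-d/4}$ turns $\|f\|_{H^1_x}$ into a sum controlled by $\lambda^{1/2-d/4}\|f\|_{L^2_x}+\lambda^{1/2-d/4}\|f\|_{\dot H^1_x}=\sqrt\lambda\,\|f_\lambda\|_{L^2_x}+\|f_\lambda\|_{\dot H^1_x}$ (up to the harmless splitting $\|f\|_{H^1_x}\lesssim\|f\|_{L^2_x}+\|f\|_{\dot H^1_x}$), turns $\|g\|_{L^2_x}$ into $\|g_\lambda\|_{L^2_x}$, and turns $\|F\|_{L^1_tL^2_x}$ into $\|F_\lambda\|_{L^1_tL^2_x}$. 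On the left, $\lambda^{1/2-d/4}\|u\|_{L^q_tL^\rho_x}=\lambda^{1/2-d/4}\cdot\lambda^{\frac{1}{2q}+\frac{d}{2\rho}}\|u_\lambda\|_{L^q_tL^\rho_x}=\lambda^{\frac12(\frac d\rho+\frac1q+1-\frac d2)}\|u_\lambda\|_{L^q_tL^\rho_x}$, matching the stated exponent. One must also note that \eqref{duhamellambda} is genuinely the dilate of \eqref{duhamel}: the cosine and sine-over-root propagators commute with the scaling because $\sqrt{\lambda-\Delta_x}$ dilates to $\sqrt\lambda\sqrt{1-\Delta_{\sqrt\lambda x}}$, and the Duhamel integral picks up the factor $\lambda$ precisely as encoded in $F_\lambda$; this is exactly the content already asserted in the paragraph preceding the proposition, so I would just invoke it.

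There is essentially no obstacle here: the only care needed is bookkeeping of the three different scaling weights ($\lambda^{-d/4}$ for an $L^2$ function, an extra $\sqrt\lambda$ whenever a derivative or an explicit $\sqrt\lambda$ prefactor is present, and the extra $\lambda$ multiplying $F$), and making sure the inhomogeneous $H^1$ norm on the data is split into its $L^2$ and $\dot H^1$ pieces so that the $L^2$ piece carries the $\sqrt\lambda$ weight shown in \eqref{Slambda} while the $\dot H^1$ piece does not. The mild subtlety — which I would flag explicitly — is that $H^1_x$ does not scale homogeneously, which is why \eqref{Slambda} has the asymmetric right-hand side $\sqrt\lambda\|f_\lambda\|_{L^2_x}+\|f_\lambda\|_{\dot H^1_x}$ rather than a single $\|f_\lambda\|_{H^1_x}$; tracking this is the one place a careless computation could go wrong. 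Everything else is a one-line change of variables.
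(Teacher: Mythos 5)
Your proof is correct and is essentially identical to the paper's: the paper records the same four scaling identities for $\|u_\lambda\|_{L^qL^\rho}$, $\|f_\lambda\|_{H^1}$, $\|g_\lambda\|_{L^2}$, $\|F_\lambda\|_{L^1L^2}$ and then multiplies \eqref{S0} through by $\lambda^{1/2-d/4}$ exactly as you do, splitting $H^1$ into its $L^2$ and $\dot H^1$ pieces to produce the asymmetric right-hand side of \eqref{Slambda}.
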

\begin{proof}
 Considering $u$ and $u_\lambda$, respectively given by \eqref{duhamel} and \eqref{duhamellambda}, an easy computation shows 
 that
 \begin{itemize}
  \myitem $\|u_\lambda\|_{L^{q}L^{\rho}} = \lambda^{\frac{-1}{2}\left(\frac{d}{\rho}+\frac{1}{q}\right)}
  \|u\|_{L^{q}L^{\rho}},$
  \\
  \myitem $\|f_\lambda\|_{H^1} = \|f_\lambda\|_{L^2}+\|f_\lambda\|_{\dot{H}^1} = \lambda^{\frac{-d}{4}}\|f\|_{L^2} 
  + \lambda^{\frac{1}{2}-\frac{d}{4}} \|f\|_{\dot{H}^1}$, \\
\myitem $\|g_\lambda\|_{L^2} =  \lambda^{\frac{1}{2}-\frac{d}{4}} \|g\|_{L^2}$,  \\
  \myitem $\|F_\lambda\|_{L^1 L^2} = 
  \lambda^{\frac{1}{2}-\frac{d}{4}}\|F\|_{L^1 L^2}$,
 \end{itemize}
and combining those estimates, one gets \eqref{Slambda}.
\end{proof}

\subsection{End of the proof of Proposition \ref{thm:Strichartz}}
\subsubsection{General computations.}
The next step is to consider \eqref{flatNLKG} taking $\lambda = 1+\lambda_j$ with $\lambda_j$ given in \eqref{basis} and 
 follow the strategy described in Section \ref{strategy}.
\\

We recall the system of equations \eqref{flatNLKG} after a decomposition on \eqref{basis}
 \begin{align*}
 u(t,x,y) & = \sum_j u_j(t,x) \Phi_j(y) \\
  F(t,x,y) & = \sum_j F_j(t,x) \Phi_j(y) \\
   f(x,y) & = \sum_j f_j(x) \Phi_j(y) \\
    g(x,y) & = \sum_j g_j(x) \Phi_j(y),
\end{align*}
that is
\begin{equation*}
 \d^2_t u_j - \Delta_{x} u_j + u_j + \lambda_j u_j = F_j, \quad u_{j|_{t=0}}(x)= f_j(x), \quad 
 \d_t u_{j|_{t=0}}(x)= g_j(x). 
\end{equation*}

From \eqref{Slambda}, with $\lambda = 1+\lambda_j$, one has for an appropriate choice of pairs that will be given later
\begin{equation*}
    \left(\lambda_j+1\right)^{\frac{1}{2}\left(\frac{d}{\rho}+\frac{1}{q}+1-\frac{d}{2}\right)} \|u_j\|_{L^q_tL^\rho_x}  \leq  C \left[ 
    \left(\lambda_j+1\right)^{1/2} \|f_j\|_{L^2_x} +
  \|f_j\|_{\dot{H}^1_x}+\|g_j\|_{L^2_x} + \|F_j\|_{L^1_tL^2_x}\right].
 \end{equation*}
Then, summing in $j$ the square as in \cite{TVI} one obtains
\begin{equation*}
  \left\|\left(\lambda_j+1\right)^{\frac{1}{2}\left(\frac{d}{\rho}+\frac{1}{q}+1-\frac{d}{2}\right)}u_j\right\|_{l^2_jL^q_tL^\rho_x}  
  \leq C \left[ \left\|\left(\lambda_j+1\right)^{1/2} f_j\right\|_{l^2_jL^2_x} +
  \|f_j\|_{l^2_j\dot{H}^1_x}+\|g_j\|_{l^2_jL^2_x} + \|F_j\|_{l^2_jL^1_tL^2_x}\right]. 
\end{equation*}
 Minkowski inequality for the left handside and for the source term (since $\max(1,2)\leq 2 \leq \min (q,\rho)$) gives
 \begin{equation*} 
 \left\|\left(\lambda_j+1\right)^{\frac{1}{2}\left(\frac{d}{\rho}+\frac{1}{q}+1-\frac{d}{2}\right)}u_j\right\|_{L^q_tL^\rho_xl^2_j}  
 \leq C \left[ \left\|\left(\lambda_j+1\right)^{1/2} f_j\right\|_{L^2_xl^2_j} +
  \|f_j\|_{\dot{H}^1_xl^2_j}+\|g_j\|_{L^2_xl^2_j} + \|F_j\|_{L^1_tL^2_xl^2_j}\right]. 
  \end{equation*}
  
Using Plancherel identity we notice that
\begin{align*}
&  \underbrace{\|\sqrt{1+\lambda_j}f_j\|_{l^2_jL^2_x}}_{\simeq \|f\|_{L^2_{x,y}} + \||\lambda_j|^{1/2}{f}\|_{L^2_{x,y}}} 
 +  \underbrace{\|f_j\|_{l^2_j\dot{H}^1_x}}_{\simeq \|\d_x f\|_{L^2_{x,y}}} \simeq \|f\|_{H^1_{x,y}}.
 \end{align*}
Then thanks to the decomposition on \eqref{basis}, one is able to handle the $y-$variable to obtain
  \begin{align*}
   \left\|(1-\Delta_y)^{\frac{1}{2}\left(\frac{d}{\rho}+\frac{1}{q}+1-\frac{d}{2}\right)}u\right\|_{L^q_tL^\rho_xL^2_y} & \leq C \left[ 
  \|f\|_{{H}^1_{x,y}} + \|g\|_{L^2_{x,y}}+\|F\|_{L^1_tL^2_xL^2_y}\right] \\  
   \left\|u\right\|_{L^q_tL^\rho_x H^{\gamma}_y} & 
   \leq C \left[\|f\|_{{H}^1_{x,y}} + \|g\|_{L^2_{x,y}}+\|F\|_{L^1_tL^2_xL^2_y}\right],
  \end{align*}
 where \begin{equation*}
        \gamma = \left(\frac{d}{\rho}+\frac{1}{q}+1-\frac{d}{2}\right)\geq 0, \textrm{ since } 2\leq r \leq \rho \leq r^*. 
       \end{equation*}
\subsubsection{Restrictions to prove \eqref{Point1} and \eqref{Point3}: ideas and remarks.}
Let us now focus on the exponents in Proposition \ref{thm:Strichartz}. 
We make the following requirements on the exponents:
\textcolor{black}{\begin{description}
 \item[(i)] We fix $q=p$, then $r$ is uniquely defined. We then find all possible $p$ for which $\rho=2p$.
 \item[(ii)] $p$ should also satisfy $H^\gamma(\mathcal{M}^k) \subset L^{2p}(\mathcal{M}^k)$.
\end{description}}
We first notice that for $(q,\rho)=(p,2p)$, $H^\gamma(\mathcal{M}^k) \subset L^{2}(\mathcal{M}^k)$ where $\gamma$ is given by~\eqref{gamma}. 
Let us now make the following remark on \textbf{$(ii)$}: 
we introduce 
\begin{equation*}
 p_{sob}=\dfrac{d+2}{d+k-2}
\end{equation*}
 and notice that for every $k\geq 1$, $p_{sob}< p_c$. The Sobolev embedding 
$$H^{\gamma}(\mathcal{M}^k)\subset L^{2p}(\mathcal{M}^k) \textrm{ if } k \geq 2\gamma>0 \; \textrm{ and } \; 
\dfrac{k}{k-2\gamma}\geq 2p,$$
gives $p\geq p_{sob}$.
But if $k<2\gamma$, Morrey estimates are available and 
$$H^{\gamma}(\mathcal{M}^k)\subset L^\infty(\mathcal{M}^k).$$
$Vol(\mathcal{M}^k)$ is finite, 
\begin{equation}
 \label{infty}
 \|f\|_{L^q_y}\leq \|f\|_{L^\infty_y} \times Vol(\mathcal{M}^k)^{1/q}, \quad \forall q\geq 1,
\end{equation}
which is true for the particular case $q=2p$. Let us notice that since $p\geq2$ and
 $H^\gamma(\mathcal{M}^k) \subset L^{2}(\mathcal{M}^k)$ then $H^\gamma(\mathcal{M}^k) \subset L^{2p}(\mathcal{M}^k)$ 
 even for a manifold with infinite volume.
Hence 
$$H^\gamma (\mathcal{M}^k) \subset L^{2p}(\mathcal{M}^k) \quad \textrm{if} \quad k<2\gamma \; \textrm{ or } \; 
k \geq 2\gamma, \; \textrm{ and }\; \dfrac{2k}{k-2\gamma}\geq 2p.$$

As an example, consider $\R^3\times \mathcal{M}^1$. Point $(1)$ of Theorem \ref{main} gives energy scattering 
for \\$5/2\leq p \leq 3$ but the interval $[7/3\; , \; 5/2)$ is not covered. 
However we can deal with global existence and energy scattering when $7/3\leq p <5/2$ 
using \eqref{infty}. 
The only cases that need to handle two different regions for $p$ are $\R^d\times \mathcal{M}^1$ for $d=2,3$. In fact, 
in our framework, the only case allowed with $d=1$ is $\R\times \mathcal{M}^2$ handled with Point $(i)$ of Theorem \ref{main}.
And for $d=4,5$, it is easy to see that $ p_{sob}\geq p_0$: 
\begin{center}
\begin{tikzpicture}
\node[left] at (0,0) {$d=2,3, \quad $};
\draw (0,0) -- (10,0);
\draw[very thick,-] (2,0) -- (8,0);
\node at (1,0) {$\bullet$};
\node at (2,0) {$\bullet$};
\node at (8,0) {$\bullet$};
\node[text=blue] at (4,0) {$\times$};
\node[below] at (1,0) {$2$};
\node[below] at (2,0) {$p_0$};
\node[below] at (8,0) {$p_c$};
\node[below,text=blue] at (4,0) {$p_{sob}$};
\draw[dashed,<->](2,-0.5) -- (4,-0.5);
\draw[dashed,<->](4,-0.5) -- (8,-0.5);
\node[below] at (3,-0.5) {\tiny Morrey estimates};
\node[below] at (6,-0.5) {\tiny Sobolev embeddings in $L^{2p}$};
\node[below] at (3,-0.75) {\tiny + $Vol(\mathcal{M}^k)<\infty$ };
\node[below] at (3,-1){\tiny =Estimates in $L^{2p}$};
\end{tikzpicture}
\vspace{2mm}
\begin{tikzpicture}
\node[left] at (0,0) {$d=4,5, \quad $};
\draw (0,0) -- (10,0);
\draw[very thick,-] (2,0) -- (8,0);
\node at (1,0) {$\bullet$};
\node at (2,0) {$\bullet$};
\node at (8,0) {$\bullet$};
\node[text=blue] at (1.5,0) {$\times$};
\node[below] at (1,0) {$2$};
\node[below,text=blue] at (1.5,0) {$p_{sob}$};
\node[below] at (8,0) {$p_c$};
\node[below] at (2,0) {$p_0$};
\draw[dashed,<->](2,-0.5) -- (8,-0.5);
\node[below] at (4,-0.5) {\tiny Sobolev embeddings in $L^{2p}$};
\end{tikzpicture}
\end{center}
\subsubsection{Claims and proofs.}
Set $q=p$, then 
\begin{align}
 \label{r(p)}
 r&=\dfrac{2dp}{dp-4}\\
 \label{s(p)}
 s&=\dfrac{dp-d-2}{dp}\\
 \label{gamma}
 \gamma&=\dfrac{d+2+2p-dp}{2p},
\end{align}
and we notice that if $d=1$, we should restrict to $p\geq 4$. 
We recall that the exponent $p$ in our framework satisfies $p_0 \leq p\leq p_c$ that is
\begin{itemize}
 \myitem $d=1$, $p \geq 5$,
 \myitem $d=2$, $3 \leq p \leq 1+ 4/k$,
 \myitem $d\geq 3$, $\max \left(2, 1+4/d \right) \leq p \leq 1+4/(d+k-2)$,
\end{itemize}
and notice that to apply Theorem \ref{embedding} we need $s>0$ and $r^* = dr/(d-sr)\geq r \geq 2$. 
The condition $s>0$ is fulfilled in the following cases
\begin{itemize}
 \myitem $d=1$, $p> 3$,
 \myitem $d=2$, for all $p>2$,
 \myitem $d\geq 3$, for all $p> 1+ 2/d$,
\end{itemize}
which is always fulfilled for $p\geq p_0$.
\\

\begin{center}
\begin{tabular}{p{12cm}}
 \textbf{Claims: }\it Consider $1\leq d\leq 5$ and $p_0 \leq p,$ given by \eqref{p0}.
 \\[2mm]
 \textit{a. Then for any $(p,r)$ admissible as in \eqref{r(p)} and $s$ given by \eqref{s(p)}, one has
 $$B^s_{r,2}(\R^d)\subset L^{2p}(\R^d), \quad \textrm{ if }\quad p\leq \dfrac{d^2+2d-4}{d^2-2d} \; \textrm{ when }\;  d\geq 3.$$}
\noindent
 \textit{b. For any couple $(p,2p)$ satisfying Claim a., and for any $k=1,2$, $k=2$ if $d=1$,
 $$H^\gamma(\mathcal{M}^k) \subset L^{2p}(\mathcal{M}^k) \quad \textrm{ if } \quad p\leq p_c. $$
where $\gamma$ is given by \eqref{gamma}}.
 \end{tabular}
 \end{center}
\vspace{2mm}
 \textbf{Proof: }
 
\begin{description}
 \item[$d=1$] From \cite{NS11}, one gets for any $4\leq p \leq \infty$, $2\leq r \leq \infty$, given by \eqref{r(p)} and 
 $s=(p-3)/p$,
 $$\|u_j\|_{L^q_tL^{\rho}_x} \lesssim \|f_j\|_{H^1_x}+\|g_j\|_{L^2_x}+\|F_j\|_{L^1_tL^2_x}, $$
 provided that $$ \rho \geq r \; \textrm{ and } \; 
 \rho \;\textrm{ is such that } \; s\geq \dfrac{1}{r}-\dfrac{1}{\rho}. $$ 
 Replacing $\rho$ with $2p$, $r\leq 2p$ yields $p\geq p_0$ and 
 for any $p\geq p_0$, we have $$\dfrac{p-3}{p}\geq \dfrac{1}{r}-\dfrac{1}{2p}=\dfrac{p-5}{2p},$$
 which gives Claim a.
 \\
 We compute $\gamma$ for the couple $(q,\rho)=(p,2p)$ which gives 
 $\gamma=(3+p)/2p$. 
 It is then easy to see that for $k=2$
 $$H^\gamma(\mathcal{M}^k) \subset L^{2p}(\mathcal{M}^k) \textrm{ if } \dfrac{2k}{k-2\gamma}\geq 2p \textrm{ that is } p= p_c=5.$$ 
 which gives Claim b and c for $d=1$.
 \\
 
 \item[$d=2$] We compute $$sr = \dfrac{2(2p-4)}{2p-4}=2.$$ Thus, we have 
  $B^s_{r,2}(\R^2) \subset L^{\rho}(\R^2),$ for all $r\leq \rho <r^*=\infty$. So it is enough to check $2p \geq r$ which is true for any 
  $p\geq p_0$ and Claim a is proved.
  \\
We then see $\gamma = 2/p$ and $$H^{2/p}(\mathcal{M}^k) \subset L^{2p}(\mathcal{M}^k) 
\textrm{ if } k<\dfrac{4}{p} \;\textrm{ or }\; k\geq \dfrac{4}{p}  \;\textrm{ and }\; \dfrac{2k}{k-4/p}\geq 2p, $$
that is $p\leq p_c$.
\\

\item[$d\geq 3$] The restrictions $2\leq r\leq 2p\leq r^* = dr/(d-sr)$ gives $$1+\dfrac{4}{d}\leq p\leq \dfrac{d^2+2d-4}{d^2-2d},$$
and since we only deal with $p\geq2$, it is easy to check that the condition $$\dfrac{d^2+2d-4}{d^2-2d}\geq 2$$ 
allows us to deal with $d\leq 5$, which gives Claim a for $d\geq3$. 
We then see that $$H^\gamma(\mathcal{M}^k) \subset L^{2}(\mathcal{M}^k)\quad \textrm{if} \quad p\geq p_0$$ and
$$H^\gamma(\mathcal{M}^k) \subset L^{2p}(\mathcal{M}^k) \quad \textrm{ if }\quad 
k< 2\gamma \;\textrm{ or }\; k \geq 2\gamma  \;\textrm{ and }\; \dfrac{2k}{k-2\gamma} \geq 2p,$$ 
gives the condition $$ p\leq \dfrac{d+k+2}{d+k-2}=p_c.$$ We finally notice that $p_c\leq \dfrac{d^2+2d-4}{d^2-2d}$ 
in the framework of Theorem \ref{main}.
\end{description}

To obtain \eqref{Point3} in Proposition \ref{thm:Strichartz}, we restrict to the cases 
covered by Claim a and b, whereas for \eqref{Point1}, restrictions carried by Claim a and c will give the results.
\\

We sum up the restrictions in the following table
\\[2mm]
\noindent
\begin{center}
\begin{tabular}{|l|c|c|c|}
\hline
Conditions & \hspace{1cm}$d=1$\hspace*{1cm} &\hspace{1cm}$d=2$\hspace*{1cm} & $d\geq3$ \\
 \hline
			 \cline{2-4}
 $(q,\rho)=(p,2p)$  &\multicolumn{2}{c|}{$p_0\leq p$} 
			& $p_0\leq p \leq \dfrac{d^2+2d-4}{d^2-2d}$ \\
\hline &&&\\
Possible $p$ to have \eqref{Point3}&  $k\geq 2,$ & $k\geq 1$, &
$k\geq 1$, $d\leq 5$, 
\\ &&&\\ &$ p\geq 5$, & 
$p\geq 3$
&$p_0\leq p \leq \dfrac{d^2+2d-4}{d^2-2d}$
\\ &&&\\
\hline 
 \hline
$H^\gamma(\mathcal{M}^k) \subset L^{2p}(\mathcal{M}^k)$& \multicolumn{3}{c|}{$p_0\leq p\leq p_c$} \\ 
\hline &&&\\
Possible $p$ to have \eqref{Point1}& $k = 2,$ & $k=1,2$, &
$k=1,2$, $d\leq 5$, $d+k\leq 6$ 
\\ & $p=5$ & $3\leq p \leq 1+\dfrac{4}{k}$ & $p_0\leq p \leq p_c$ \\ &&&\\
\hline
\end{tabular}
\end{center}
 which concludes the proof of Proposition \ref{thm:Strichartz}.

\section{Proof of Theorem \ref{main}}
\subsection{Global existence of the solution}
Let us first remark that (at least local) existence theory is available on smooth ($C^\infty$) Riemannian manifolds without boundaries, 
of dimension larger than 3 (in view of \cite{KapitanskiiII,KapitanskiiIII}), for defocusing and energy critical nonlinearities. 
The finite speed propagation is a key point in the analysis.
We will not use those results here, since we also deal with focusing energy (sub-)critical cases and will perform a classical fixed point argument 
in the small data framework. For simplicity, we write  $L^p_tL^{2p}_{x,y}=L^p(\R,L^{2p}_{x,y})$. We recall that we only consider $p\geq2$.
\\
\begin{multline*}
\Phi_0 u(t) = \cos\left(t. \sqrt{1-\Delta_{x,y}}\right)f(x) 
+ \dfrac{\sin \left(t. \sqrt{1-\Delta_{x,y}}\right)}{\sqrt{1-\Delta_{x,y}}} g(x)
 + \\
\int_0^t \dfrac{\sin \left((t-s). \sqrt{1-\Delta_{x,y}}\right)}{\sqrt{1-\Delta_{x,y}}} (|u|^{p-1}u)(s) \; ds.   
\end{multline*}
Then \eqref{NLKG} is equivalent to $\Phi_0 u(t)=u(t)$.
Applying \eqref{Point1} with $F=|u|^{p-1}u$ on $\Phi_0 u(t)$ we get
\begin{align*}
\|\Phi_0 u(t)\|_{L^p_tL^{2p}_{x,y}}& 
\leq C \left[\|f\|_{H^1_{x,y}}+\|g\|_{L^2_{x,y}}+ \||u|^{p-1}u\|_{L^1_tL^2_{x,y}} \right] \\
 & \leq C \left[\|f\|_{H^1_{x,y}}+\|g\|_{L^2_{x,y}}+ \|u\|_{L^p_tL^{2p}_{x,y}}^p \right].
\end{align*}
We define for $R>0$
$$X_R=\left\lbrace \varphi | \|\varphi\|_{L^p_tL^{2p}_{x,y}}\leq R \right\rbrace,$$ and $\delta>0$ such that 
$\|f\|_{H^1_{x,y}}+\|g\|_{L^2_{x,y}}<\delta$, and will specify $\delta$ later.
Taking $R=2C\delta$, any $u\in X_{2C\delta}$ satisfies
$$\|\Phi_0 u(t)\|_{L^p_tL^{2p}_{x,y}} \leq C \left[\|f\|_{H^1_{x,y}}+\|g\|_{L^2_{x,y}}+ \|u\|_{L^p_tL^{2p}_{x,y}}^p \right] \leq 
2C\delta \left(\dfrac{1}{2}+ (2\delta)^{p-1}C^p \right).$$
Taking $\delta \leq \dfrac{1}{(2C)^{p/(p-1)}}$ one obtains that $\Phi_0$ maps $X_{2C\delta}$ into itself.
\\

\noindent
Consider two solution $v,w\in X_{2C\delta}$ with the same initial data. Then with Proposition \ref{thm:Strichartz}, we have
$$\|\Phi_0 v - \Phi_0 w\|_{L^p_tL^{2p}_{x,y}} \leq C \left\||v|^{p-1}v-|w|^{p-1}w \right\|_{L^1_tL^2_{x,y}}.$$
We can proceed as in \cite{CazCourant} to handle non integer nonlinearities.
$$\left| |v|^{p-1}v-|w|^{p-1}w \right| \leq \widetilde{C}\left(|v|^{p-1}+|w|^{p-1}\right)|v-w|.$$
$$\|\Phi_0 v - \Phi_0 w\|_{L^p_tL^{2p}_{x,y}} \leq \widetilde{C}\|v-w\|_{L^p_tL^{2p}_{x,y}}
\left[\|v\|_{L^p_tL^{2p}_{x,y}}^{p-1}+\|w\|_{L^p_tL^{2p}_{x,y}}^{p-1} \right].$$
Any $v,w$ in $X_{2C\delta}$ satisfy
$$\|\Phi_0 v - \Phi_0 w\|_{L^p_tL^{2p}_{x,y}} \leq \left(2^p\delta^{p-1} \max(C,\widetilde{C})^{p}\right) \|v-w\|_{L^p_tL^{2p}_{x,y}}.$$
Hence choosing $\delta$ such that $2^p\delta^{p-1} \max(C,\widetilde{C})^{p}<1$, $\Phi_0$ is a contraction on $X_{2C\delta}$.
\\

Finally, one can deduce global existence of the (unique) solution in $L^p(\R,L^{2p}_{x,y})$ for any $(f,g)\in H^1_{x,y}\times L^2_{x,y}$
such that $$\|f\|_{H^1_{x,y}}+\|g\|_{L^2_{x,y}}< \delta_0 = \min \left(\dfrac{1}{(2C)^{p/(p-1)}};
\dfrac{1}{\left(2\max(C,\widetilde{C})\right)^{p/(p-1)}}\right).$$

Then, using the energy estimate \eqref{energy} and $u\in L^p(\R, L^{2p}_{x,y})$, one gets a solution $u$ such that
$$u(t,x,y)\in C^0(\R, H^1_{x,y})\cap C^1(\R, L^2_{x,y}), \quad \textrm{ and so } \quad \d_t u(t,x,y) \in C^0(\R,L^2_{x,y}). $$
\subsection{Scattering results} Scattering is proved in the fashion of \cite{NS11}. 
We write
$$\d_t\begin{pmatrix}
         u \\ \d_t u
        \end{pmatrix}
 = \begin{pmatrix}
    0 & 1 \\ -\Delta_{x,y}+1 & 0
   \end{pmatrix}\begin{pmatrix}
         u \\ \d_t u
        \end{pmatrix} + \begin{pmatrix}
         0 \\ \pm |u|^{p-1}u
        \end{pmatrix},
$$
we get with $e^{tH}= \begin{pmatrix}
    \cos\left(t. \sqrt{1-\Delta_{x,y}}\right) & \dfrac{\sin \left(t. \sqrt{1-\Delta_{x,y}}\right)}{\sqrt{1-\Delta_{x,y}}} 
    \\ \\
    -\sin \left(t. \sqrt{1-\Delta_{x,y}}\right).\left(\sqrt{1-\Delta_{x,y}}\right) &  \cos\left(t. \sqrt{1-\Delta_{x,y}}\right)
   \end{pmatrix}$, \\[2mm] which is unitary on the energy space $H^1_{x,y}\times L^2_{x,y}$
\begin{align*}\begin{pmatrix}
         u \\ \d_t u
        \end{pmatrix}
  &= e^{tH}\begin{pmatrix}
         f \\ g
        \end{pmatrix}+       
   \int_0^te^{(t-s)H}\begin{pmatrix}
         0 \\ \pm |u|^{p-1}u
        \end{pmatrix}ds \\        
e^{-tH}\begin{pmatrix}
         u \\ \d_t u
        \end{pmatrix}
  &= \begin{pmatrix}
         f \\ g
        \end{pmatrix}+       
   \int_0^te^{(-s)H}\begin{pmatrix}
         0 \\ \pm |u|^{p-1}u
        \end{pmatrix}ds.
\end{align*}
Writing $ V(t)= e^{-tH}\begin{pmatrix}
         u \\ \d_t u
        \end{pmatrix}$, and considering $0<\tau<t$ it is easy to check that
   $$\|V(t)-V(\tau)\|_{H^1\times L^2}\leq C \int_{\tau}^{t} \||u|^{p-1}u (s)\|_{L^2} ds \leq C \|u\|^p_{L^p([\tau,t],L^{2p})},$$
   where the latter norm tends to zero when $t,\tau$ tends to $\pm \infty$ since the solution belongs to 
   $L^p(\R,L^{2p}_{x,y})$. Therefore, there exist 
   $(f^\pm,g^\pm)\in H^1_{x,y}\times L^2_{x,y}$ such that $V(t)\rightarrow \begin{pmatrix}
         f^\pm \\ g^\pm
        \end{pmatrix}$ in $H^1\times L^2$ as $t\rightarrow \pm \infty$.

\section{Proof of Theorem \ref{main2}}
As it is noticed in \cite{TVI, TarulliRdM2}, $(1-\Delta_y)^{\gamma/2}$ commutes with the linear part of \eqref{NLKG} on $\R^d\times \mathcal{M}^k$ for any $\gamma \geq 0$. 
Let us consider the assumptions for which Proposition \ref{thm:Strichartz} holds. 
From point $(2)$ of Proposition \ref{thm:Strichartz}, we have
\begin{align}
\label{S2'}&
\left\|\cos\left(t. \sqrt{1-\Delta_{x,y}}\right)(1-\Delta_y)^{\gamma/2}f\right\|_{L^{p}_t L^{2p}_{x}L^2_y} 
\\ \notag + & \left\|\dfrac{\sin \left(t. \sqrt{1-\Delta_{x,y}}\right)}{\sqrt{1-\Delta_{x,y}}} 
(1-\Delta_y)^{\gamma/2}g \right\|_{L^{p}_t L^{2p}_{x}L^2_y}\\
\notag
+& \left\|\int_0^t \dfrac{\sin \left((t-s). 
\sqrt{1-\Delta_{x,y}}\right)}{\sqrt{1-\Delta_{x,y}}} (1-\Delta_y)^{\gamma/2}F(s)\; ds \right\|_{L^{p}_t L^{2p}_{x}L^2_y} \\ 
\notag \\
\notag
&\leq C \left[ \|(1-\Delta_y)^{\gamma/2}f\|_{H^1_{x,y}} + \|(1-\Delta_y)^{\gamma/2}g\|_{L^2_{x,y}} 
+ \|(1-\Delta_y)^{\gamma/2}F\|_{L^{1}_t L^2_{x,y}} \right],
\end{align}
 where $C>0$ depends on the choice of the pairs.
The key point is that for $\gamma > k/2$, 
$H^{\gamma}(\mathcal{M}^k_y)$ is an algebra. Therefore, 
considering a product source term 
$F(t,x,y) = \prod_{i=1}^{I} u_i(t,x,y),$
one can write for $I \in \N$
$$\left\| \prod_{i=1}^{I} u_i \right\|_{H^\gamma_y} \leq C \prod_{i=1}^I \| u_i\|_{H^\gamma_y}.$$
Hence for $u_i \in \left\lbrace u, \overline{u}\right\rbrace,$ $I=p$
$$\left\|\prod_{i=1}^{p} \|u_i\|_{H^\gamma_y} \right\|_{L^1_tL^2_{x,y}} \leq 
C \left\|\| u\|_{H^\gamma_y}^p\right\|_{L^1_tL^2_x} \leq C \|u\|_{L^p_tL^{2p}_xH^\gamma_y}^p.$$
\textcolor{black}{The rest of the proof of global existence follows easily by using same arguments as for the proof of Theorem \ref{main} with 
$(1-\Delta_y)^{\gamma/2}u$. Note that the energy estimate together with 
$$\|u\|^p_{L^p_tL^{2p}_{x,y}}\lesssim \|u\|^p_{L^p_tL^{2p}_{x}L^\infty_y}
\lesssim \|u\|^p_{L^p_tL^{2p}_{x}H^{\gamma}_y} $$ gives the continuity in time of the solution.
\\
With the estimate, we see that writing $ V(t)= e^{-tH}\begin{pmatrix}
         u \\ \d_t u
        \end{pmatrix}$, and considering $0<\tau<t$ it is easy to check that
   \begin{align*}
    \|V(t)-V(\tau)\|_{\left(H^{1,\gamma}_{x,y}\times H^{0,\gamma}_{x,y}\right)\cap\left(H^1_{x,y}\times L^2_{x,y}\right)}
    &\leq C \int_{\tau}^{t} \||u|^{p-1}u (s)\|_{L^2_{x,y}} ds \\ & \leq C \|u\|^p_{L^p([\tau,t],L^{2p}_{x,y})}
   \leq C  \|u\|^p_{L^p([\tau,t]L^{2p}_{x}H^{\gamma}_y)},
   \end{align*}
   where the latter norm tends to zero when $t,\tau$ tends to $\pm \infty$ since the solution belongs to 
   $L^p(\R,L^{2p}_{x}H^\gamma_y)$, $\gamma>k/2$. Therefore, there exist 
   $(f^\pm,g^\pm)\in \left(H^{1,\gamma}_{x,y}\times H^{0,\gamma}_{x,y}\right)\cap\left(H^1_{x,y}\times L^2_{x,y}\right)$ 
   such that $V(t)\rightarrow \begin{pmatrix}
         f^\pm \\ g^\pm
        \end{pmatrix}$ in $\left(H^{1,\gamma}_{x,y}\times H^{0,\gamma}_{x,y}\right)\cap\left(H^1_{x,y}\times L^2_{x,y}\right)$ as $t\rightarrow \pm \infty$.}
\\[5mm]
\textbf{Acknowledgments.} The authors wish to thank Thomas Duyckaerts for several interesting discussions and clarifications about the problem. 
\bibliographystyle{amsplain}
\bibliography{biblioNEW}
\end{document}